\newtheorem{theorem}{Theorem}[section]
\newtheorem{lemma}[theorem]{Lemma}
\newtheorem{proposition}[theorem]{Proposition}
\theoremstyle{definition}
\newtheorem{remark}[theorem]{Remark}
\numberwithin{equation}{section}
\def\imod#1{\allowbreak\mkern5mu({\operator@font mod}\,\,#1)}
\begin{document}

\title[Unimodal sequences and mixed false theta functions]{Unimodal sequences and mixed false theta functions}

\author{Kevin Allen}
\author{Robert Osburn}

\address{School of Mathematics and Statistics, University College Dublin, Belfield, Dublin 4, Ireland}

\email{kevin.allen1@ucdconnect.ie}
\email{robert.osburn@ucd.ie}

\subjclass[2020]{05A15, 11F11, 11F37}
\keywords{Unimodal sequences, Hecke-Appell series, Hecke-type double sums, false theta functions}

\date{\today}

\begin{abstract}
We consider two-parameter generalizations of Hecke-Appell type expansions for the generating functions of unimodal and special unimodal sequences. We then determine their explicit representations which involve mixed false theta functions. These results complement recent striking work of Mortenson and Zwegers on the mixed mock modularity of the generalized $U$-function due to Hikami and Lovejoy. As an application, we demonstrate how to recover classical partial theta function identities which appear in Ramanujan's lost notebook and in work of Warnaar.
\end{abstract}

\maketitle

\section{Introduction}

\subsection{Motivation} A sequence of positive integers is {\it strongly unimodal} if
\begin{equation} \label{strongun}
a_1 < \dotsc < a_r < c > b_1 > \dotsc > b_s 
\end{equation}
with $n = c + \sum_{j=1}^{r} a_j + \sum_{j=1}^{s} b_j$. Here, $c$ is the {\it peak} and $n$ is the {\it weight} of the sequence. The {\it rank} of such a sequence is defined as $s-r$, i.e., the number of terms after $c$ minus the number of terms before $c$. For example, there are six strongly unimodal sequences of weight 5, namely
\begin{equation*}
(5), \, (1, 4), \, (4, 1), \, (1,3,1), \, (2,3), \, (3,2). 
\end{equation*}
The ranks are $0$, $-1$, $1$, $0$, $-1$ and $1$, respectively. Let $u(m,n)$ be the number of such sequences of weight $n$ and rank $m$. A brief reflection reveals the generating function
\begin{equation*} \label{stronggen}
U(x;q) := \sum_{\substack{n \geq 1 \\ m \in \mathbb{Z}}} u(m,n) x^m q^n = \sum_{n \geq 0} (-xq)_n (-x^{-1} q)_n q^{n+1}
\end{equation*}
where $q$ is a non-zero complex number with $|q| < 1$ and
\begin{equation*}
(a)_n = (a;q)_n := \prod_{k=1}^{n} (1-aq^{k-1})
\end{equation*}
is the usual $q$-Pochhammer symbol, valid for $n \in \mathbb{N} \cup \{ \infty \}$. Such sequences not only abound in algebra, combinatorics and geometry  \cite{branden, brenti, stanley}, but have recent intriguing connections to knot theory and modular forms \cite{bopr, masbaum}. In 2015, Hikami and Lovejoy \cite{hl} introduced the generalized $U$-function
\begin{equation} \label{genstrongU}
\begin{aligned} 
U_{t}^{(m)}(x;q) := q^{-t }\sum_{\substack{k_t \geq \dots \geq k_1 \geq 0 \\ k_m \geq 1}} & (-xq)_{k_t-1}(-x^{-1} q)_{k_t-1} q^{k_t} \\
&\times \prod_{i=1}^{t-1} q^{k_i^2} \begin{bmatrix} k_{i+1} - k_i - i + \sum_{j=1}^{i}(2k_j + \chi(m>j)) \\ k_{i+1}-k_i \end{bmatrix}
\end{aligned} 
\end{equation}
where $t$, $m \in \mathbb{Z}$ with $1 \leq m \leq t$,  $\chi(X): = 1$ if $X$ is true and  $\chi(X):= 0$ otherwise  and
\begin{equation*}
\begin{bmatrix} n \\ k \end{bmatrix} := \frac{(q)_n}{(q)_{n-k} (q)_k}
\end{equation*}
is the standard $q$-binomial coefficient. Note that 
\begin{equation*}
U_{1}^{(1)}(x;q) = q^{-1} U(x;q). 
\end{equation*}
The motivation for (\ref{genstrongU}) arises in quantum topology. Let $K$ be a knot and $J_{N}(K;q)$ be the $N$th colored Jones polynomial, normalized to be 1 for the unknot. By computing an explicit formula for the cyclotomic coefficients of the colored Jones polynomial of the left-handed torus knots $T^{*}_{(2,2t+1)}$ \cite[Proposition 3.2]{hl} and comparing with (\ref{genstrongU}), one observes
\begin{equation*} \label{torusgen}
U_t^{(1)}(-q^N; q) = J_N(T^{*}_{(2,2t+1)}; q)
\end{equation*}
and so $U_{t}^{(m)}(x;q)$ can be viewed as ``extracted" from $J_N(T^{*}_{(2,2t+1)}; q)$. In addition, Hikami and Lovejoy proved the Hecke-Appell type expansion \cite[Theorem 5.6]{hl}
\begin{equation} \label{ha}
\begin{aligned}
U_{t}^{(m)}(-x;q) & = -q^{-\frac{t}{2} - \frac{m}{2} + \frac{3}{8}} \frac{(qx)_{\infty} (x^{-1} q)_{\infty}}{(q)_{\infty}^2}  \\ 
& \times \left( \sum_{\substack{r, s \geq 0 \\ r \not\equiv s \;(\text{mod}\;2)}} - \sum_{\substack{r, s < 0 \\ r \not\equiv s \;(\text{mod}\;2)}} \right ) \frac{(-1)^{\frac{r-s-1}{2}} q^{\frac{1}{8} r^2 + \frac{4t+3}{4} rs + \frac{1}{8} s^2 + \frac{1+m+t}{2} r + \frac{1-m+t}{2} s}}{1-xq^{\frac{r+s+1}{2}}}
\end{aligned}
\end{equation}
and stated \cite[page 13]{hl}``$\dotsc$ it is hoped that the Hecke series expansions established in this paper will turn out to be useful for determining modular transformation formulae for $U_{t}^{(m)}(x;q)$." Given that the base case $U_1^{(1)}(x;q)$ is a mixed mock modular form \cite[Theorem 4.1]{hl}, one wonders if the same is true for $U_{t}^{(m)}(x;q)$. Mixed mock modular forms are functions which lie in the tensor space of mock modular forms and modular forms \cite{dmz, losurvey}. In recent striking work \cite{mz}, Mortenson and Zwegers show that this is indeed the case by expressing $U_{t}^{(m)}(x;q)$ in terms of finite sums of Hecke-type double sums
\begin{equation} \label{hecke}
f_{a,b,c}(x,y; q) := \sum_{r, s \in \mathbb{Z}} \text{sg}(r,s) (-1)^{r+s} x^r y^s q^{a \binom{r}{2} + brs + c \binom{s}{2}}
\end{equation}
where $a$, $b$ and $c$ are positive integers,
\begin{equation} \label{sg}
\text{sg}(r,s) := \frac{\text{sg}(r) + \text{sg}(s)}{2}
\end{equation}
and
\begin{equation} \label{sgo}
\text{sg}(r) = 
\begin{cases}
1 & \text{if $r \geq 0$}, \\
-1 & \text{if $r < 0$.}
\end{cases}
\end{equation}
Precisely, they prove for $t \geq 2$ and $1 \leq m \leq t$ \cite[Theorem 1.7, Corollary 5.3]{mz}
\begin{equation} \label{umixed}
\begin{aligned}
(1-x) U_{t-1}^{(m)}(-x;q) & = \frac{q^{-m+1-t}}{(q)_{\infty}^3} \sum_{k=0}^{2t-1} (-1)^k q^{\binom{k+1}{2}} \\
& \qquad \times \left( f_{1,4t-1, 1}(q^{k+m+t}, q^{k-t-m+1};q) - q^m f_{1,4t-1,1}(q^{k-t+m+1}, q^{k-m+t}; q) \right)  \\
& \qquad  \times f_{1,2t,2t(2t-1)}(x^{-1} q^{1+k}, -q^{(2t-1)(k+t) +t}; q). \\
\end{aligned}
\end{equation}
As discussed below, one can show that the expression within the parenthesis in (\ref{umixed}) is (up to an appropriate power of $q$) a modular form while the remaining double sum is a mixed mock modular form. Thus, $U_{t}^{(m)}(x;q)$ is a mixed mock modular form.

\subsection{Statement of Results}
A sequence of positive integers is {\it unimodal} if each $<$ is replaced with $\leq$ and we write $\overline{c}$ for the distinguished peak in (\ref{strongun}). For example, there are twelve unimodal sequences of weight 4, namely
\begin{equation*}
\begin{aligned}
 (\overline{4}), \, (1, \overline{3}), \, (\overline{3}, 1), &\, (1, \overline{2}, 1), \, (\overline{2}, 2), \, (2, \overline{2}), \\
(1,1,\overline{2}), \, (\overline{2},1, 1), \, (\overline{1}, 1, 1, 1), & \, (1, \overline{1}, 1, 1), \, (1, 1, \overline{1}, 1), \, (1,1,1,\overline{1}).
\end{aligned}
\end{equation*}
These sequences have numerous other guises \cite[Section 3]{andrewsIV} and appear in a wide variety of areas \cite{murty}. The rank of such a sequence is again $s-r$. Inspired by (\ref{ha}) and (\ref{umixed}), the purpose of this paper is to first consider two-parameter generalizations of Hecke-Appell type expansions for four of the five generating functions of unimodal and special unimodal sequences which appear in \cite{bl, kl1, kl2}. We also briefly discuss a two-parameter generalization of the Hecke-Appell type expansion for the fifth case \cite{kl2} (see Section 4). To our knowledge, this covers all known cases of unimodal sequences whose generating function has such an expansion. We then demonstrate that the techniques in \cite{mz} are robust enough to find explicit representations for {\it all} of these generalizations. These representations involve mixed false theta functions, i.e., expressions of the form $\sum h_v g_v$ where $h_v$ is a modular form and $g_v$ is a false theta function \cite{rogers}. These new occurrences of mixed modularity nicely complement (\ref{umixed}) and hint at a general underlying structure for Hecke-Appell type expansions with such properties. This structure will be the subject of forthcoming work. Let $D := b^2 - ac$. The key is to express each of these two-parameter generalizations in terms of finite sums of (\ref{hecke}) and then apply \cite[Corollary 4.2]{mz} if $D>0$ or \cite[Theorem 1.4]{mortfalse} if $D< 0$, both of which we recall in Section 2. For example, one uses \cite[Corollary 4.2]{mz} to deduce the mixed mock modularity of (\ref{umixed}). As an application, we demonstrate how the base cases of our results recover classical partial theta function identities which appear in Ramanujan's lost notebook and in work of Warnaar.

Let
\begin{equation} \label{theta}
\Theta(x;q) := (x)_{\infty} (q/x)_{\infty} (q)_{\infty} = \sum_{n \in \mathbb{Z}} (-1)^n q^{\binom{n}{2}} x^n.
\end{equation}
For the first case, let ${\bf u}(m,n)$ denote the number of unimodal sequences of weight $n$ and rank $m$ and consider its generating function \cite[Eq. (2.2)]{kl1}
\begin{equation*}
\mathcal{U}(x;q) := \sum_{\substack{n \geq 1 \\ m \in \mathbb{Z}}} {\bf u}(m,n) x^m q^n = \sum_{n \geq 0} \frac{q^n}{(xq)_n (q/x)_n}
\end{equation*}
which satisfies the ``marvelous" partial theta function identity \cite{andrewslost, andrewsI}, \cite[Entry 6.3.2]{abII}
\begin{equation} \label{pt1}
\mathcal{U}(-x;q) = (1+x) \sum_{n \geq 0} x^{3n} q^{\frac{n(3n+1)}{2}} (1 - x^2 q^{2n+1}) - \frac{(1+x) (q)_{\infty}}{\Theta(-x;q)} \sum_{n \geq 0} (-1)^n x^{2n+1} q^{\frac{n(n+1)}{2}}
\end{equation}
for $x\neq 0$ and the Hecke-Appell type expansion \cite[Eq. (2.5)]{kl1}
\begin{equation} \label{base1}
\mathcal{U}(x;q) = \frac{(1-x)}{(q)_{\infty}^2} \left( \sum_{r,s \geq 0} - \sum_{r,s < 0} \right) \frac{(-1)^{r+s} q^{\frac{r^2}{2} + 2rs + \frac{s^2}{2} + \frac{3}{2}r + \frac{1}{2}s}}{1-xq^r}.
\end{equation}
For $t$, $m \in \mathbb{Z}$ with $t \geq 1$, $-t \leq m \leq 3t-2$ and $t \equiv m \pmod{2}$, consider the generalization
\begin{equation} \label{gen1}
g_{t,m}(x;q) := \left( \sum_{r,s \geq 0} - \sum_{r,s < 0} \right) \frac{(-1)^{r+s} q^{\frac{r^2}{2} + 2trs + \frac{s^2}{2} + \frac{t+1+m}{2} r + \frac{t+1-m}{2}s}}{1-xq^r}
\end{equation}
and
\begin{equation} \label{realgen1}
\mathcal{U}_{t}^{(m)}(x;q) := \frac{(1-x)}{(q)_{\infty}^2} g_{t,m}(x;q).
\end{equation}
By (\ref{base1})--(\ref{realgen1}), $\mathcal{U}_1^{(1)}(x;q) = \mathcal{U}(x;q)$. Following \cite{hm}, we use the term ``generic" to mean that the parameters do not cause poles in the Appell-Lerch series (\ref{alsums}) or in the quotients of theta functions which occur after applying (\ref{mzftom}) to the Hecke-type double sums. Our first result shows that $\mathcal{U}_{t}^{(m)}(x;q)$ is a mixed false theta function.

\begin{theorem} \label{main1} Let $t$, $m \in \mathbb{Z}$ with $t \geq 1$, $-t \leq m \leq 3t-2$ and $t \equiv m \pmod{2}$. For generic $x$, we have
\begin{equation} \label{mf1}
\begin{aligned}
\mathcal{U}_{t}^{(m)}(x;q) & = \frac{(1-x)}{\Theta(x;q)}\frac{q^{1-3t^2 - \frac{t-m}{2} + tm}}{(q)_{\infty}^2} \sum_{k=0}^{4t^2 - 2} (-1)^{k+1} q^{\binom{k+1}{2} + k} f_{1,2t,1}(q^{2-4t^2 + \frac{t+m}{2} + k}, q^{1 + \frac{t-m}{2}}; q) \\
& \qquad \qquad \qquad \qquad \qquad \qquad  \times f_{1,4t^2 - 1, 4t^2 (4t^2 - 1)}(x^{-1} q^{k+1}, -q^{4t^2 k - t^2 + tm - \frac{t-m}{2} + 8t^4}; q).
\end{aligned}
\end{equation}
\end{theorem}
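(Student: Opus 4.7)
The strategy is to adapt the Mortenson--Zwegers approach that produces (\ref{umixed}) to the Hecke--Appell series $g_{t,m}(x;q)$. The pole $1/(1-xq^r)$ in (\ref{gen1}) signals an Appell--Lerch structure, and the goal is to replace it by a finite sum of Hecke-type double sums of the form (\ref{hecke}), which is the shape of the right-hand side of (\ref{mf1}).

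First I would multiply $g_{t,m}(x;q)$ by $\Theta(x;q)$ to clear the pole. Using the Jacobi triple product $\Theta(x;q) = \sum_{n \in \Z} (-1)^n q^{\binom{n}{2}} x^n$ and the quasi-periodicity $\Theta(xq^r;q) = (-x)^{-r} q^{-\binom{r}{2}} \Theta(x;q)$, the quotient $\Theta(x;q)/(1 - xq^r)$ can be expanded as a sum over an additional integer $n$ in which the pole is absent. This turns $\Theta(x;q)\, g_{t,m}(x;q)$ into a triple Hecke-type sum whose exponent is a ternary quadratic form, built from the binary form $\tfrac{r^2}{2} + 2trs + \tfrac{s^2}{2}$ in (\ref{gen1}) together with the $\binom{\cdot}{2}$ contribution from the theta expansion.

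Second, I would split this triple sum into residue classes modulo $4t^2 - 1$ of a suitable linear combination of the three summation indices; the modulus $4t^2 - 1$ is forced by the discriminant $b^2 - ac = 4t^2 - 1$ of the binary form in (\ref{gen1}) and matches the length of the outer $k$-sum in (\ref{mf1}). A unimodular change of variables inside each residue class then diagonalizes the ternary form into a direct sum of two binary quadratic forms, one with $(a, b, c) = (1, 2t, 1)$ (discriminant $D = 4t^2 - 1 > 0$) and one with $(a, b, c) = (1, 4t^2 - 1, 4t^2(4t^2 - 1))$ (discriminant $D = -(4t^2 - 1) < 0$). These produce, inside each class $k$, a product of factors $f_{1, 2t, 1}$ and $f_{1, 4t^2 - 1, 4t^2(4t^2 - 1)}$, with arguments determined by how $x$ and the $q$-powers redistribute under the change of variables.

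The main technical obstacle is the precise bookkeeping of all $q$-exponents and $x$-powers through the change of variables, so that the arguments $q^{2 - 4t^2 + (t+m)/2 + k}$, $q^{1 + (t-m)/2}$, $x^{-1}q^{k+1}$, and $-q^{4t^2 k - t^2 + tm - (t-m)/2 + 8t^4}$, as well as the prefactor $q^{1 - 3t^2 - (t-m)/2 + tm}$ and the inner sign $(-1)^{k+1} q^{\binom{k+1}{2} + k}$, match exactly. The parity assumption $t \equiv m \pmod{2}$ is what ensures $\tfrac{t \pm m}{2} \in \Z$ so that the unimodular change of variables preserves integrality, and the genericity hypothesis on $x$ rules out accidental zeros in $\Theta(x;q)$. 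Once the identity for $\Theta(x;q)\, g_{t,m}(x;q)$ has been established, dividing by $\Theta(x;q)$ and multiplying by $\tfrac{1-x}{(q)_{\infty}^2}$ via (\ref{realgen1}) yields (\ref{mf1}).
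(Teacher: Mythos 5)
Your opening and closing moves coincide with the paper's: you clear the pole by passing to $\hat{g}_{t,m}(x;q)=\Theta(x;q)\,g_{t,m}(x;q)$ as in (\ref{t1}), and at the end you divide by $\Theta(x;q)$ and invoke (\ref{realgen1}). But the core of your argument --- expand $\Theta(x;q)/(1-xq^r)$ over one extra index to get a ternary quadratic form, then split into residue classes mod $4t^2-1$ and diagonalize by a unimodular change of variables --- is not what the paper does, and as described it has a genuine gap. The obstruction is the sign function. The series $g_{t,m}$ is supported on $\textup{sg}(r,s)\neq 0$, i.e.\ on the union of the quadrants $r,s\geq 0$ and $r,s<0$, and $\Theta(x;q)/(1-xq^r)$ is itself an indefinite \emph{double} sum (it equals $f_{1,1,1}(qx^{-1},q^{r+1};q)$, not a sum over a single free index $n$). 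A linear change of variables does not carry the resulting region of summation onto a product of two $\textup{sg}$-regions; the discrepancy produces precisely the correction terms in which all the content of the theorem lives --- the theta-quotient term $\frac{(q)_{\infty}^3}{\Theta(x;q)}\sum_{i=1}^{2t}(\cdots)$ in (\ref{r1}) and the finitely supported terms $c_r'$ in the coefficient recurrence --- and your sketch never accounts for them. A dimension count also shows the proposed diagonalization cannot be literal: the right-hand side of (\ref{mf1}) carries five summation indices ($k$ plus two for each $f$), while your ternary- (or quaternary-) form expression has at most four.

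For comparison, the paper's actual mechanism is: compute the auxiliary sum (\ref{sum}) in two ways --- once via the shift $(r,s)\to(r-1,s+2t)$ together with the $\textup{sg}$ identity (\ref{prop1}), and once via the finite geometric expansion (\ref{geo}), which is where the $k$-sum of length $4t^2-1$ and the factor $f_{1,2t,1}$ actually originate --- to obtain the functional equation (\ref{r2}) for $\hat{g}_{t,m}$ under $x\mapsto qx$. One then expands $\hat{g}_{t,m}$ as the Laurent series (\ref{L}), converts the functional equation into the recurrence (\ref{recur2}) for the coefficients $a_r$, proves $a_r\to 0$ as $r\to\pm\infty$ via Cauchy's formula (\ref{arform}), uses that decay to show the recurrence has the unique solution (\ref{claim1}), and finally resums over $r$ and $l$ to assemble $f_{1,4t^2-1,4t^2(4t^2-1)}$. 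To salvage your route you would need, at minimum, an exact identity for $\Theta(x;q)/(1-xq^r)$ with its region of summation made explicit, plus a precise accounting of how the $\textup{sg}$-regions transform under your change of variables --- at which point you would in effect be rebuilding the functional-equation machinery by hand.
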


For the second case, consider unimodal sequences with a double peak, i.e., sequences of the form
\begin{equation*}
a_1 \leq \dotsc \leq a_r \leq \overline{c} \, \overline{c} \geq b_1 \geq \dotsc \geq b_s 
\end{equation*}
with weight $n=2c + \sum_{i=1}^{r} a_i + \sum_{i=1}^{s} b_i$. For example, there are eleven such sequences of weight 6, namely
\begin{equation*}
\begin{aligned}
(\overline{3}, \overline{3}), \, (\overline{2}, \overline{2}, 2), \, (2, \overline{2}, \overline{2}), \, (\overline{2}, \overline{2}, 1, 1), \, & (1, \overline{2}, \overline{2}, 1), \, (1, 1, \overline{2}, \overline{2}), \\
(\overline{1}, \overline{1}, 1, 1, 1, 1), \, (1, \overline{1}, \overline{1}, 1, 1, 1), \, (1, 1, \overline{1}, \overline{1}, 1, 1), & \, (1, 1, 1, \overline{1}, \overline{1}, 1), \, (1, 1, 1, 1, \overline{1}, \overline{1}).
\end{aligned}
\end{equation*}
The rank of such a unimodal sequence is $s-r$ where we assume that the empty sequence has rank 0. Let $W(m,n)$ denote the number of such sequences of weight $n$ and rank $m$ and consider its generating function \cite[Eq. (2.1)]{kl2}
\begin{equation*}
W(x;q) := \sum_{\substack{n \geq 0 \\ m \in \mathbb{Z}}} W(m,n) x^m q^n  = \sum_{n \geq 0} \frac{q^{2n}}{(xq)_n (q/x)_n}
\end{equation*}
which satisfies the partial theta function identity \cite[p. 379]{warnaar}
\begin{equation} \label{pt2}
\begin{aligned}
W(x;q) & = (1-x) \Biggl( 1+x + (1+x^2) \sum_{n \geq 1} (-1)^n x^{3n-2} q^{\frac{n(3n-1)}{2}} (1 + xq^n) \\
& \qquad \qquad \qquad \qquad \qquad \qquad \qquad + \frac{x^2 + (1+x^2) \sum_{n \geq 1} (-1)^n x^{2n} q^{\binom{n+1}{2}}}{(x)_{\infty} (q/x)_{\infty}} \Biggr)
\end{aligned}
\end{equation}
for $x \neq 0$ and the Hecke-Appell type expansion \cite[Eq. (2.3)]{kl2}
\begin{equation} \label{base2}
W(x;q) =  \frac{(1-x)}{(q)_{\infty}^2} \left( \sum_{r,s \geq 0} - \sum_{r,s < 0} \right) \frac{(-1)^{r+s} q^{\frac{r^2}{2} + 2rs + \frac{s^2}{2} + \frac{r}{2} + \frac{s}{2}} (1+q^{2r})}{1-xq^r}  - \frac{1}{(xq)_{\infty} (q/x)_{\infty}}.
\end{equation}
For $t$, $m \in \mathbb{Z}$ with $t \geq 1$, $1-t \leq m \leq t$, consider the generalization
\begin{equation} \label{gen2}
h_{t,m}(x;q) := \left( \sum_{r,s \geq 0} - \sum_{r,s < 0} \right) \frac{(-1)^{r+s} q^{\binom{r+1}{2} + 2trs + \binom{s+1}{2} + (t-m)r} (1+q^{2mr})}{1-xq^r}
\end{equation}
and  
\begin{equation} \label{realgen2}
W_{t}^{(m)}(x;q) := \frac{(1-x)}{(q)_{\infty}^2} h_{t,m}(x;q)  - \frac{1}{(xq)_{\infty} (q/x)_{\infty}}.
\end{equation}
By (\ref{base2})--(\ref{realgen2}), $W_{1}^{(1)}(x;q) = W(x;q)$. Our second result demonstrates that $W_{t}^{(m)}(x;q)$ is the sum of a mixed false theta function and a modular form.

\begin{theorem} \label{main2} Let $t$, $m \in \mathbb{Z}$ with $t \geq 1$, $1-t \leq m \leq t$. For generic $x$, we have
\begin{equation} \label{mf2}
\begin{aligned}
W_{t}^{(m)}(x;q) & = \frac{(1-x)q^{1-m-2t^2}}{(q)_{\infty}^2 \Theta(x;q)} \sum_{k=0}^{4t^2-2} (-1)^{k+1} q^{\binom{k+1}{2} +k} f_{1,2t,1}(q^{t-m+2-4t^2+k}, q; q) \\
& \qquad\qquad \qquad \qquad \quad \times \Biggl( f_{1,4t^2-1, 4t^2(4t^2-1)}(x^{-1} q^{k+1}, -q^{8t^4 - m + 4t^2 k}; q) \\
&  \qquad \qquad \qquad \qquad \qquad \quad + f_{1, 4t^2 - 1, 4t^2 (4t^2 - 1)}(x q^{k+1}, -q^{8t^4 - m + 4t^2k}; q) \Biggr) \\
& - \frac{1}{(xq)_{\infty} (q/x)_{\infty}}.
\end{aligned}
\end{equation}
\end{theorem}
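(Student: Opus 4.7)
The plan is to follow the template of Theorem~\ref{main1}, adapting it to accommodate both the factor $(1+q^{2mr})$ appearing in (\ref{gen2}) and the extra term $-1/\bigl((xq)_{\infty}(q/x)_{\infty}\bigr)$ in (\ref{realgen2}). The first step is to split $h_{t,m}(x;q) = h_{t,m}^{(1)}(x;q) + h_{t,m}^{(2)}(x;q)$ according to the two summands of $1+q^{2mr}$, so that $h_{t,m}^{(1)}$ carries linear exponent $(t-m)r$ while $h_{t,m}^{(2)}$ carries $(t+m)r$ in the otherwise identical quadratic form.

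The second step is to symmetrize $h_{t,m}^{(2)}(x;q)$ under $x \leftrightarrow x^{-1}$. Using a change of summation variables of the shape $(r,s)\mapsto(-r-a,-s-b)$ for appropriate integers $a,b$ together with the elementary identity $\tfrac{1}{1-xq^r}=-x^{-1}q^{-r}/(1-x^{-1}q^{-r})$, one rewrites $h_{t,m}^{(2)}$ as a Hecke-Appell series in the variable $x^{-1}$ whose quadratic form and sign support match those of $h_{t,m}^{(1)}$, up to an overall factor of $-x^{-1}$ and a power of $q$. This symmetrization is precisely what produces the two-term combination inside the bracket of (\ref{mf2}): a piece with $\Theta(x;q)^{-1}f_{1,4t^2-1,4t^2(4t^2-1)}(x^{-1}q^{k+1},\ldots)$ minus a piece with $x^{-1}\Theta(x^{-1};q)^{-1}f_{1,4t^2-1,4t^2(4t^2-1)}(xq^{k+1},\ldots)$.

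With $h_{t,m}^{(1)}$ and the transformed $h_{t,m}^{(2)}$ both cast in the shape of the Hecke-Appell series treated in Theorem~\ref{main1}, I would then apply to each the same Mortenson-Zwegers reduction: decompose the index $r$ modulo $4t^2-1$ to produce the outer sum over $k\in\{0,\ldots,4t^2-2\}$, invoke the Bailey-type identity converting the Appell denominator $(1-x)/(1-xq^r)$ into $\Theta(x;q)^{-1}$ times an Appell-Lerch series, and regroup the resulting quadratic form into the product of $f_{1,2t,1}$ (positive discriminant $4t^2-1$) and $f_{1,4t^2-1,4t^2(4t^2-1)}$ (negative discriminant $-(4t^2-1)$). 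The summand $-1/\bigl((xq)_{\infty}(q/x)_{\infty}\bigr)$ from (\ref{realgen2}) passes through the calculation unchanged and appears as the final line of (\ref{mf2}).

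The main obstacle is the bookkeeping of $q$-exponents and verification that the two pieces reassemble into the symmetric form prescribed by (\ref{mf2}). One must check that after the reflection substitution in $h_{t,m}^{(2)}$ and all subsequent Mortenson-Zwegers index changes, the quadratic form $\binom{r+1}{2}+2trs+\binom{s+1}{2}+(t+m)r$ recombines into the same $k$-indexed shape as for $h_{t,m}^{(1)}$, now with every $x$ replaced by $x^{-1}$, and with matching outer prefactor $(-1)^{k+1}q^{\binom{k+1}{2}+k}$, first $f_{1,2t,1}$ arguments $(q^{t-m+2-4t^2+k},q)$, and $f_{1,4t^2-1,\ldots}$ second argument $-q^{8t^4-m+4t^2k}$. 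The genericity hypothesis on $x$ ensures that no additional poles arise from either the Appell-Lerch terms or the theta denominators introduced by the symmetrization.
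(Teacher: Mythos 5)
Your proposal follows essentially the same route as the paper: the splitting of $h_{t,m}$ according to $1+q^{2mr}$ together with the reflection $(r,s)\mapsto(-r-1,-s-1)$ is exactly the paper's Proposition \ref{help} (identity (\ref{seconddecompose}), writing $h_{t,m}(x;q)=\mathcal{H}_{t}^{(m)}(x;q)-x^{-1}\mathcal{H}_{t}^{(m)}(x^{-1};q)$ with no extra power of $q$), and the subsequent treatment of the single piece $\mathcal{H}_{t}^{(m)}$ is the same functional-equation/Laurent-coefficient argument used for Theorem \ref{main1}, with the extra term $-1/\bigl((xq)_{\infty}(q/x)_{\infty}\bigr)$ carried along unchanged. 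The one minor imprecision is that the outer sum over $k\in\{0,\dots,4t^2-2\}$ comes from expanding the geometric factor $\bigl(1-(xq^{r+1})^{1-4t^2}\bigr)/(1-xq^{r+1})$ in the construction of the functional equation, while the Laurent coefficients themselves are decomposed modulo $4t^2$ rather than $4t^2-1$.
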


For the third case, recall that an overpartition is a partition in which the first occurrence of a part may be overlined. Consider unimodal sequences where $c$ is odd, $\sum a_i$ is a partition without repeated even parts and $\sum b_i$ is an overpartition into odd parts whose largest part is not $\overline{c}$. For example, there are twelve such sequences of weight 5, namely
\begin{equation*}
\begin{aligned}
(\overline{5}), \, (1, \overline{3}, 1), \, (1, 1, \overline{3}), \, (\overline{3}, 1, 1), \, (\overline{3}, \overline{1}, 1), \, (1, \overline{3}, \overline{1}), & \, (2, \overline{3}), \\
(1, 1, 1, 1, \overline{1}), \, (1, 1, 1, \overline{1}, 1), \, (1, 1, \overline{1}, 1, 1), \, (1, \overline{1}, 1, 1, 1), & \, (\overline{1}, 1, 1, 1, 1).
\end{aligned}
\end{equation*}
The rank of such a sequence is the number of odd non-overlined parts in $\sum b_i$ minus the number of odd parts in $\sum a_i$ where the empty sequence is assumed to have rank 0. Let $\mathcal{V}(m,n)$ denote the number of such sequences of weight $n$ and rank $m$ and consider its generating function \cite[Eq. (4.1)]{kl2}
\begin{equation*}
\mathcal{V}(x;q) :=  \sum_{\substack{n \geq 0 \\ m \in \mathbb{Z}}} \mathcal{V}(m,n) x^m q^n = \sum_{n \geq 0} \frac{(-q)_{2n} q^{2n+1}}{(xq;q^2)_{n+1} (q/x;q^2)_{n+1}}
\end{equation*}
which satisfies the partial theta function identity \cite[Entry 6.3.7]{abII}
\begin{equation} \label{pt3}
\left(1+\frac{1}{x} \right) \mathcal{V}(x;q) = - \sum_{n \geq 0} (-x)^n q^{n(n+1)} + \frac{\Theta(-q;q^4)}{\Theta(xq;q^2)} \sum_{n \geq 0} (-x)^n q^{\frac{n(n+1)}{2}}
\end{equation}
for $x \neq 0$ and the Hecke-Appell type expansion \cite[Eq. (4.3)]{kl2}
\begin{equation} \label{base3}
\mathcal{V}(x;q) =  \frac{1}{(q)_{\infty} (q^2; q^2)_{\infty}} \left( \sum_{r, s \geq 0} - \sum_{r,s < 0} \right) \frac{(-1)^{r+s} q^{r^2 + 2rs + \frac{s^2}{2} + 3r + \frac{3s}{2} + 1}}{(1+q^{2r+1}) (1-xq^{2r+1})}.
\end{equation}
For $t$, $m \in \mathbb{Z}$ where $t \geq 1$, consider the generalization
\begin{equation*} \label{gen4}
\overline{k}_{t,m}(x;q) = \frac{xq}{1+x} \left( \frac{1}{x} k_{t,m}(-1;q) + k_{t,m}(x;q) \right)
\end{equation*}
where
\begin{equation} \label{gen41}
k_{t,m}(x;q) := \left( \sum_{r, s \geq 0} - \sum_{r, s < 0} \right) \frac{(-1)^{r+s} q^{2\binom{r}{2} + 2trs + \binom{s}{2} + 2(t+1)r + 2ms}}{1-xq^{2r+1}}
 \end{equation}
and
\begin{equation} \label{realgen4}
\mathcal{V}_{t}^{(m)}(x;q) : =  \frac{1}{(q)_{\infty} (q^2; q^2)_{\infty}}  \overline{k}_{t,m}(x;q).
\end{equation}
By (\ref{base3})--(\ref{realgen4}), $\mathcal{V}_{1}^{(1)}(x;q) = \mathcal{V}(x;q)$. Our third result establishes that $\mathcal{V}_{t}^{(m)}(x;q)$ is a mixed false theta function.

\begin{theorem} \label{main3} Let $t$, $m \in \mathbb{Z}$ where $t \geq 1$. For generic $x$, we have
\begin{equation} \label{mf3}
\begin{aligned}
\mathcal{V}_{t}^{(m)}(x;q) & = \frac{q^{-2t^2 + 3t - 4tm}}{(1+x) (q)_{\infty} (q^2; q^2)_{\infty}} \sum_{k=0}^{2t^2 - 2} (-1)^{k} q^{k^2 + 3k} f_{2, 2t, 1}(q^{2t+2 - 4t^2 + 2k}, q^{2m}; q) \\
& \qquad \times \Biggl( \frac{1}{\Theta(-q;q^2)} f_{1, 2t^2 - 1, 2t^2 (2t^2 - 1)}(-q^{2k+1}, -q^{4t^2 k - 2 + 3t - 4tm + 4t^4} ; q^2) \\
& \qquad \qquad - \frac{1}{\Theta(qx;q^2)}  f_{1, 2t^2 - 1, 2t^2 (2t^2 - 1)}(x^{-1} q^{2k+1}, -q^{4t^2 k - 2 + 3t - 4tm + 4t^4}; q^2) \Biggr).
\end{aligned}
\end{equation}
\end{theorem}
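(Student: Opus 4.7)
The plan is to adapt the template used by Mortenson and Zwegers in their proof of \eqref{umixed}: express the Hecke-Appell-type sum $k_{t,m}(x;q)$ with its $x$-dependent pole $1/(1-xq^{2r+1})$ as a finite sum of products of Hecke-type double sums $f_{a,b,c}$, and then identify one factor as a theta function (via \eqref{theta}) and the other as false-theta by \cite[Theorem 1.4]{mortfalse}. By \eqref{realgen4} and the definition of $\overline{k}_{t,m}$, it suffices to prove one identity for $k_{t,m}(x;q)$ in terms of $\Theta(xq;q^2)^{-1}$ times a finite sum of $f_{2,2t,1}\cdot f_{2,4t^2-2,4t^2(2t^2-1)}$; specialising $x=-1$ in that same identity gives a companion expression for $k_{t,m}(-1;q)$ with $\Theta(-q;q^2)^{-1}$ in place of $\Theta(xq;q^2)^{-1}$, and assembling $\frac{x}{1+x}\bigl(\tfrac{1}{x}k_{t,m}(-1;q)+k_{t,m}(x;q)\bigr)$ and dividing by $(q)_\infty(q^2;q^2)_\infty$ produces \eqref{mf3}.

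The derivation of this master identity for $k_{t,m}(x;q)$ proceeds in two stages. First, expand the pole as a geometric series in $x$ (positive powers when $r\geq 0$, negative powers when $r<0$), absorb the new summation index into the existing quadratic exponent, and repackage the result so that the $x$-dependence collects into a theta quotient; after a $q\mapsto q^2$ substitution in the standard Appell--Lerch quotient identities, this brings out the factor $1/\Theta(xq;q^2)$. Second, perform the finite dissection dictated by the discriminant $D=4t^2-2=2(2t^2-1)$ of the quadratic form $2\binom{r}{2}+2trs+\binom{s}{2}+2tr+2ms$: split the residual sum by writing one index modulo $2t^2-1$, complete the square in the new lattice coordinates, and observe that the emerging two-variable sum factors as $f_{2,2t,1}(\cdot,\cdot;q)\cdot f_{2,4t^2-2,4t^2(2t^2-1)}(\cdot,\cdot;q)$. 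A quick check confirms that the first factor has discriminant $4t^2-2>0$, matching the original indefinite form, while the second has discriminant $(4t^2-2)^2-2\cdot 4t^2(2t^2-1)=-4(2t^2-1)<0$, precisely the regime of \cite[Theorem 1.4]{mortfalse} and the source of the false-theta component.

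The principal obstacle is the bookkeeping: pinning down the exact index shifts that produce the arguments $q^{2t+2-4t^2+2k}$, $q^{2m}$, $x^{-1}q^{2k+1}$ and $-q^{4t^2k-2+3t-4tm+4t^4}$ of the two $f_{a,b,c}$'s in \eqref{mf3}, together with the overall prefactor $xq^{-2t^2+3t-4tm}/((1+x)(q)_\infty(q^2;q^2)_\infty)$ and the sign $(-1)^{k+1}q^{k^2+3k}$ attached to each summand. Once the substitutions of the form $r=(2t^2-1)R+k+c_1$, $s\mapsto S+c_2$ are fixed, the completing-the-square computation is routine, and the claimed exponents, arguments and the range $0\leq k\leq 2t^2-2$ can be read off directly.
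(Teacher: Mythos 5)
Your reduction to a single identity for $k_{t,m}(x;q)$, followed by the specialization $x=-1$ and assembly through $\overline{k}_{t,m}$ and (\ref{realgen4}), matches the paper, and you have the two discriminants right ($4t^2-2>0$ for the $f_{2,2t,1}$ factor, $-4(2t^2-1)<0$ for the $f_{2,4t^2-2,4t^2(2t^2-1)}$ factor, so that Theorem \ref{mzmock} and Theorem \ref{mortf} apply respectively). But the mechanism you propose for the master identity has a genuine gap. Expanding the pole $1/(1-xq^{2r+1})$ as an infinite geometric series (positive powers of $x$ for $r\geq 0$, negative for $r<0$) produces a triple sum in which the powers of $x$ are entangled with the sign conditions on $r$; there is no way to ``collect the $x$-dependence into a theta quotient'' at that point, and a subsequent dissection of one index modulo $2t^2-1$ with completing the square does not produce a \emph{product} of two Hecke-type double sums --- that product structure is not a completing-the-square phenomenon, and the relevant modulus in the argument is in any case $2t^2$, not $2t^2-1$.

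What the paper actually does, and what your sketch is missing, is the functional-equation/Laurent-coefficient method of \cite{mz}: one observes that $\hat{k}_{t,m}(x):=\Theta(qx;q^2)k_{t,m}(x;q)$ is entire in $x\in\mathbb{C}\setminus\{0\}$, derives the functional equation (\ref{r8}) under $x\mapsto q^2x$ by evaluating the auxiliary sum (\ref{sum3}) in two ways --- its numerator $1-x^{1-2t^2}q^{(2r+3)(1-2t^2)}$ is a \emph{finite} geometric series of length $2t^2-1$, which is where the index $k$ and the $f_{2,2t,1}$ factors originate, while the other evaluation uses the shift $(r,s)\to(r-1,s+2t)$ together with (\ref{prop1}) and (\ref{theta3}) --- and then writes $\hat{k}_{t,m}(x)=\sum_{r}(-1)^rq^{\frac{r^2}{2t^2}-\frac{2t^2-2+3t-4tm}{2t^2}r}a_rx^{-r}$ and compares coefficients to obtain the recurrence $a_r-a_{r+2t^2}=b_r+c_r$. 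The crucial step is solving this recurrence using $\lim_{r\to\pm\infty}a_r=0$ to get $a_r=\sum_{l\in\mathbb{Z}}\textup{sg}(r,l)\,b_{r+2t^2l}$; it is exactly this $\textup{sg}(r,l)$ that, recombined with the $r$-sum, produces the indefinite (false-theta) factor $f_{2,4t^2-2,4t^2(2t^2-1)}(x^{-1}q^{2k+1},-q^{4t^2k-2+3t-4tm+4t^4};q)$. Without this step --- or an equivalent argument identifying the particular solution of the recurrence with decaying boundary values as the one realized by $\hat{k}_{t,m}$ --- the claimed factorization and the precise arguments and exponents in (\ref{mf3}) are asserted rather than derived.
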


For the final case, consider odd unimodal sequences, i.e., unimodal sequences where the parts $a_i$, $b_j$ and $c$ are odd positive integers. For example, there are six such sequences of weight 4, namely
\begin{equation*}
(1, \overline{3}), \, (\overline{3}, 1), \, (\overline{1}, 1, 1, 1), \, (1, \overline{1}, 1, 1), \, (1, 1, \overline{1}, 1), \, (1, 1, 1, \overline{1}).
\end{equation*}
Again, the rank is $s-r$. Let $\text{ou}(m,n)$ denote the number of odd unimodal sequences of weight $n$ and rank $m$ and consider its generating function  \cite[Eq. (1.5)]{bl}
\begin{equation*}
\mathcal{O}(x;q) := \sum_{\substack{n \geq 1 \\ m \in \mathbb{Z}}} \text{ou}(m,n) x^m q^n = \sum_{n \geq 0} \frac{q^{2n+1}}{(xq;q^2)_{n+1} (q/x;q^2)_{n+1}}
\end{equation*}
which satisfies the partial theta function identity \cite{andrewsI}, \cite[Entry 6.3.4]{abII}
\begin{equation} \label{pt4}
\mathcal{O}(-x;q) = \sum_{n \geq 0} x^{3n+1} q^{3n^2 + 2n} (1 - xq^{2n+1}) - \frac{(q^2;q^2)_{\infty}}{\Theta(-xq;q^2)} \sum_{n \geq 0} (-1)^n x^{2n+1} q^{n(n+1)} 
\end{equation}
for $x \neq 0$ and the Hecke-Appell type expansion \cite[Eq. (1.7)]{bl}
\begin{equation} \label{base5}
\mathcal{O}(x;q) = \frac{q}{(q^2; q^2)_{\infty}^2}  \left( \sum_{r,s \geq 0} - \sum_{r,s < 0} \right) \frac{(-1)^{r+s} q^{r^2 + 4rs + s^2 + 3r + 3s}}{1-xq^{2r+1}}.
\end{equation}
For $t$, $m \in \mathbb{Z}$ with $t \geq 1$, $1-t \leq m \leq t$, consider the generalization
\begin{equation} \label{gen5}
p_{t,m}(x;q) := \left( \sum_{r,s \geq 0} - \sum_{r,s < 0} \right) \frac{(-1)^{r+s} q^{\binom{r}{2} + 2trs + \binom{s}{2} + (t+m)r + (t+m)s}}{1-xq^r}
\end{equation}
and
\begin{equation} \label{realgen5}
\mathcal{O}_{t}^{(m)}(x;q) := \frac{q}{(q^2; q^2)_{\infty}^2} p_{t,m}(qx;q^2).
\end{equation}
By (\ref{base5})--(\ref{realgen5}), $\mathcal{O}_{1}^{(1)}(x;q) = \mathcal{O}(x;q)$. Our last result exhibits that $\mathcal{O}_{t}^{(m)}(x;q)$ is a mixed false theta function.

\begin{theorem} \label{main4} Let $t$, $m \in \mathbb{Z}$ with $t \geq 1$, $1-t \leq m \leq t$. For generic $x$, we have
\begin{equation} \label{mf4}
\begin{aligned}
\mathcal{O}_{t}^{(m)}(x;q) & = \frac{q^{3- 8t^2 - 2(m-1)(2t-1)}}{\Theta(xq;q^2) (q^2; q^2)_{\infty}^2} \sum_{k=0}^{4t^2-2} (-1)^{k+1} q^{k^2 + 3k} f_{1,2t,1}(q^{2t+2m+2-8t^2 + 2k}, q^{2t + 2m}; q^2) \\
& \qquad \qquad \qquad \qquad \times f_{1,4t^2 - 1, 4t^2 (4t^2 - 1)}(x^{-1} q^{2k+1}, - q^{16t^4 - 4t^2 - 2(m-1)(2t-1) + 8t^2k}; q^2).
\end{aligned}
\end{equation}
\end{theorem}

\begin{remark}
A two-parameter generalization in the case of the Hecke-Appell type expansion for odd strongly unimodal sequences \cite[Eq. (1.11)]{bl} can be found in \cite[Theorem 1.6, Corollary 1.7]{boro} and is a mixed mock modular form. 
\end{remark}

The paper is organized as follows. In Section 2, we first recall the key results from \cite{mortfalse, mz}. We then provide the necessary background on identities for (\ref{sg}) and (\ref{theta}) and prove alternative forms for (\ref{gen2}) and (\ref{gen41}). In Section 3, we prove Theorems \ref{main1}--\ref{main4}. Observe that the starting point of \cite{mz} is (\ref{ha}) whereas in this paper one needs to initially construct the two-parameter generalizations (\ref{gen1}), (\ref{gen2}), (\ref{gen41}) and (\ref{gen5}). Also, crucial steps in the proofs of Theorems \ref{main1}--\ref{main4} include the discovery of the appropriate sums (\ref{sum}), (\ref{sum1}), (\ref{sum3}) and (\ref{sum4}) and a novel argument in proving (\ref{keylim}), (\ref{newkeylim1}) and (\ref{keylim1}) (cf. \cite[(2.2)]{mz}). In Section 4, we demonstrate how the $t=m=1$ cases of Theorems \ref{main1}--\ref{main4} recover (\ref{pt1}), (\ref{pt2}), (\ref{pt3}) and (\ref{pt4}). As the reader will see, these verifications require some dexterity. In Section 5, we make some concluding remarks and discuss future directions. 

\section{Preliminaries}

We begin with two important results, the first converts Hecke-type double sums (\ref{hecke}) into Appell-Lerch series 
\begin{equation} \label{alsums}
m(x,q,z) := \frac{1}{\Theta(z;q)} \sum_{r \in \mathbb{Z}} \frac{(-1)^r q^{\binom{r}{2}} z^r}{1-q^{r-1} xz}
\end{equation}
while the second expresses (\ref{hecke}) in terms of mixed false theta functions. Here, $x$ and $z$ are non-zero complex numbers with neither $z$ nor $xz$ an integral power of $q$. Note that specializations of (\ref{alsums}) give mock theta functions \cite{zwegers}.

\begin{theorem}[\cite{mz}, Corollary 4.2] \label{mzmock} For $D := b^2 - ac > 0$ and generic $x$ and $y$, we have 
\begin{equation} \label{mzftom}
f_{a,b,c}(x,y;q) = g_{a,b,c}(x,y, -1, -1; q) + \frac{1}{\Theta(-1; q^{aD}) \Theta(-1;q^{cD})} \vartheta_{a,b,c}(x, y; q)
\end{equation}
where
\begin{equation*}
\begin{aligned}
g_{a,b,c}(x, y, z_1, z_0; q) & := \sum_{i=0}^{a-1} (-y)^i q^{c \binom{i}{2}} \Theta(q^{bi}x; q^a) m(-q^{a \binom{b+1}{2} - c \binom{a+1}{2} - iD} \frac{(-y)^a}{(-x)^b}, z_0; q^{aD}) \\
& + \sum_{i=0}^{c-1} (-x)^i q^{a \binom{i}{2}} \Theta(q^{bi}y; q^c)  m(-q^{c \binom{b+1}{2} - a \binom{c+1}{2} - iD} \frac{(-x)^c}{(-y)^b}, z_1; q^{cD}), 
\end{aligned}
\end{equation*}
\begin{equation*}
\begin{aligned}
\vartheta_{a,b,c}(x, y; q) & := \sum_{d^{*}=0}^{b-1} \sum_{e^{*}=0}^{b-1} q^{a \binom{d- c/2}{2} + b(d- c/2)(e+ a/2) + c\binom{e+ a/2}{2}} (-x)^{d - c/2} (-y)^{e+ a/2} \\
& \times \sum_{f=0}^{b-1}q^{ab^2 \binom{f}{2} + \bigl( a(bd + b^2 + ce) - ac(b+1)/2 \bigr) f} (-y)^{af} \Theta(-q^{c(ad + be + a(b-1)/2 + abf)} (-x)^c ; q^{cb^2}) \\
& \times \Theta(-q^{a ((d+ b(b+1)/2 + bf)D + c(a-b)/2)} (-x)^{-ac} (-y)^{ab}; q^{ab^2 D}) \\
& \times \frac{(q^{bD}; q^{bD})_{\infty}^3 \Theta(q^{D(d+e) + ac - b(a+c)/2} (-x)^{b-c} (-y)^{b-a} ; q^{bD})}{\Theta(q^{De + a(c-b)/2} (-x)^b (-y)^{-a} ; q^{bD}) \, \Theta(q^{Dd + c(a-b)/2} (-y)^b (-x)^{-c}; q^{bD})},
\end{aligned}
\end{equation*}
$d:= d^{*} + \{ c/2 \}$ and $e := e^{*} + \{ a/2 \}$ with $0 \leq \{ \alpha \} < 1$ denoting the fractional part of $\alpha$.
\end{theorem}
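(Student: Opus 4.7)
The plan is to establish (\ref{mzftom}) by combining an indefinite-theta sign-splitting with an Appell-Lerch reduction, treating $\vartheta_{a,b,c}$ as the theta-quotient correction forced upon us when $D > 0$. The overall strategy follows the signature-$(1,1)$ indefinite-theta framework initiated by Hickerson and refined by Zwegers and Mortenson: write the Hecke-type double sum formally as an Appell-Lerch expression, then identify and package the residual boundary terms into a closed quotient of theta functions.

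First, substitute $\text{sg}(r,s) = \tfrac{1}{2}(\text{sg}(r) + \text{sg}(s))$ into the definition (\ref{hecke}) to split $f_{a,b,c}$ into two symmetric halves. The half carrying $\text{sg}(r)$ will produce the $\sum_{i=0}^{c-1}$ term in $g_{a,b,c}$, while the half carrying $\text{sg}(s)$ produces the $\sum_{i=0}^{a-1}$ term by the evident symmetry $a \leftrightarrow c$, $x \leftrightarrow y$. In the first half, write $r = ck + i$ with $0 \leq i \leq c-1$ to separate the $c\binom{r}{2}$ block from the cross term $brs$; completing the square in $s$ against $bkc$ turns the inner sum over $s$ into a theta series proportional to $\Theta(q^{bi}y; q^c)$ times a factor depending only on $k$ and $i$. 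The remaining sum over $k$ then has exactly the shape of (\ref{alsums}) with base $q$ replaced by $q^{cD}$, and we identify it as $m(\cdot, -1; q^{cD})$ with the explicit argument dictated by tracking the $q$-exponent. Summing over $i = 0, \ldots, c-1$ and adding the symmetric piece reconstructs $g_{a,b,c}(x, y, -1, -1; q)$.

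The rearrangement above holds only formally when $D > 0$, since the relevant partial theta inversions fail to converge absolutely. To capture the discrepancy, compare $f_{a,b,c}$ with $g_{a,b,c}(x,y,-1,-1;q)$ termwise: the boundary error collects into a finite sum indexed by residue classes $0 \leq d^*, e^* \leq b-1$ of $r, s \bmod b$, with the half-integer shifts $d = d^* + \{c/2\}$ and $e = e^* + \{a/2\}$ arising from the parity of the diagonal of the quadratic form $a\binom{r}{2} + brs + c\binom{s}{2}$. For each pair $(d^*, e^*)$, a further internal summation over $f = 0, \ldots, b-1$ collapses the tails in $r$ and $s$ into closed-form theta products, and repeated application of the Jacobi triple product (\ref{theta}) brings the assembled expression into the quotient shape displayed in $\vartheta_{a,b,c}$. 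The prefactor $\Theta(-1; q^{aD})^{-1} \Theta(-1; q^{cD})^{-1}$ appears because we force $z_0 = z_1 = -1$ inside the Appell-Lerch factors $m(\cdot, z; \cdot)$.

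The main obstacle will be the bookkeeping in the last step: one must verify that the residues at the poles of $m(\cdot, -1; q^{aD})$ and $m(\cdot, -1; q^{cD})$, together with the boundary tails of the sign-splitting across all $(d^*, e^*, f)$, consolidate precisely into the triple theta product written in the statement. This is a manipulation in the ring of theta functions that relies repeatedly on the Jacobi triple product and its three-term counterparts; the combinatorics of the indices $d, e, f$ together with the fractional shifts $\{a/2\}$ and $\{c/2\}$ demands careful tracking of signs, exponents, and the genericity conditions on $x, y$ that prevent denominators from vanishing.
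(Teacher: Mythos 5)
You should first note that the paper contains no proof of this statement at all: Theorem \ref{mzmock} is quoted verbatim from \cite[Corollary 4.2]{mz} as imported background, so your attempt can only be measured against the argument in \cite{mz}, which in turn rests on the machinery of \cite{hm}. Your opening reduction is essentially the correct first move in that literature: split $\textup{sg}(r,s)=\tfrac{1}{2}(\textup{sg}(r)+\textup{sg}(s))$, sum the free variable into a theta function (for fixed $r$ the $s$-sum is exactly $\Theta(q^{br}y;q^c)$ --- no completing the square is needed, and note the diagonal coefficient attached to $r$ in (\ref{hecke}) is $a$, not $c$ as you write), then reduce $r=i+cj$ with $0\leq i\leq c-1$ using the quasi-periodicity (\ref{theta1}) of $\Theta(\cdot;q^c)$. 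But here is the genuine gap: what this rearrangement produces in the $j$-variable is \emph{not} an Appell--Lerch sum. Collecting exponents yields the sign-weighted partial theta series $\sum_{j}\textup{sg}(j)\left(\cdots\right)^{j}q^{-cD\binom{j+1}{2}}$, i.e.\ precisely the series appearing in Theorem \ref{mortf}; there is no denominator $1-q^{j-1}xz$ anywhere, so your claim that ``the remaining sum over $k$ has exactly the shape of (\ref{alsums})'' is false. Worse, for $D>0$ these partial theta series \emph{diverge}, since the exponent $-cD\binom{j+1}{2}\to-\infty$. Consequently your proposed mechanism for producing $\vartheta_{a,b,c}$ --- compare $f_{a,b,c}$ with $g_{a,b,c}(x,y,-1,-1;q)$ termwise and collect a ``boundary error'' over residues mod $b$ --- cannot be executed: the rearranged series you would compare against do not exist even formally. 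This is exactly the dichotomy underlying the two quoted theorems: for $D<0$ the computation you describe closes up immediately and gives Theorem \ref{mortf}, whereas $D>0$ requires a different argument, not a patched version of the same one.

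What the actual proof does at this juncture, and what your sketch omits, is the following. One shows that $f_{a,b,c}(x,y;q)$ and $g_{a,b,c}(x,y,z_1,z_0;q)$ satisfy the \emph{same} quasi-periodicity functional equations in $x$ and $y$ (under $x\to q^a x$ and $y\to q^c y$), so that their difference, suitably normalized, transforms like a theta function; this elliptic discrepancy is then identified as an explicit theta quotient by analyzing its zeros and poles and invoking theta identities at level $b$ --- this is where the residues $d^{*},e^{*},f$ running mod $b$ and the fractional shifts $\{a/2\},\{c/2\}$ genuinely enter, tied to $D=b^{2}-ac$, rather than arising as boundary terms of a termwise comparison. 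Finally, in \cite{mz} the dependence on $z_0,z_1$ is controlled by the changing-$z$ identity for $m(x,q,z)$, whose theta-quotient form is what accounts for the prefactor $\Theta(-1;q^{aD})^{-1}\Theta(-1;q^{cD})^{-1}$ upon specializing $z_0=z_1=-1$; it is not merely a consequence of ``forcing'' $z=-1$ inside $m$. Your proposal names the right ingredients and the right ancestry, but the entire central step --- establishing the functional equations, proving ellipticity of the difference, and pinning down the explicit quotient $\vartheta_{a,b,c}$ --- is asserted rather than proved, and the one concrete mechanism you do propose fails on convergence grounds.
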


\begin{theorem}[\cite{mortfalse}, Theorem 1.4] \label{mortf} For $D := b^2 - ac < 0$, we have
\begin{equation} \label{ftofalse}
\begin{aligned}
f_{a,b,c}(x,y;q) & = \frac{1}{2} \Biggl( \sum_{i=0}^{a-1} (-y)^i q^{c \binom{i}{2}} \Theta(q^{bi}x; q^a) \sum_{r \in \mathbb{Z}} \text{sg}(r)  \left( q^{a \binom{b+1}{2} - c \binom{a+1}{2} - iD} \frac{(-y)^a}{(-x)^b} \right)^r q^{-aD \binom{r+1}{2}} \\ 
& \qquad \quad + \sum_{i=0}^{c-1} (-x)^i q^{a \binom{i}{2}} \Theta(q^{bi} y; q^c)  \sum_{r \in \mathbb{Z}} \text{sg}(r)  \left( q^{c \binom{b+1}{2} - a \binom{c+1}{2} - iD} \frac{(-x)^c}{(-y)^b} \right)^r q^{-cD \binom{r+1}{2}} \Biggr).
\end{aligned}
\end{equation}
\end{theorem}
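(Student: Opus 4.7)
The plan is to follow the same template as in the proofs of Theorems \ref{main1}--\ref{main3}: express $\ell_{t,m}(x;q)$ as a finite combination of Hecke-type double sums $f_{a,b,c}(x,y;q)$ drawn from the exponents in (\ref{gen3}), then apply Theorem \ref{mzmock} when $D = b^2 - ac > 0$ and Theorem \ref{mortf} when $D < 0$. The complication specific to this case is the trailing factor $(1 - q^{(3t-2m)r + 2\binom{3t-1}{2}s + \binom{3t-1}{2}})$ in the summand of (\ref{gen3}). This factor prevents reduction to a single Hecke double sum: subtracting the two halves of $(1-q^{\cdots})$ couples two independent sign sums and forces the triple-sum building blocks $\frak{g}_{1,1,3t,1,3t,1}$ of (\ref{g}) into the final answer, along with a residual boundary modular form.

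The first step is to invoke the alternative form of (\ref{gen3}) promised in Section 2 and, using the standard expansion of $1/(1-xq^r)$ together with the $\text{sg}$ and $\Theta$ identities recalled there, to rewrite $(1-x)\ell_{t,m}(x;q)$ as a combination of Appell-type quotients divided by $\Theta(x;q)$ and $\Theta(x^{-1};q)$. The quadratic form in the exponent of (\ref{gen3}) is $\binom{r}{2} + 3trs + 3t(3t-1)\binom{s}{2}$, pointing to the triple $(a,b,c) = (1, 3t, 3t(3t-1))$ with discriminant $D = 3t > 0$, while the secondary quadratic form introduced by the partial fraction expansion matches $(a',b',c') = (1,1,3t)$ with $D = 1-3t < 0$. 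Careful tracking of arguments should yield the specializations $f_{1,3t,3t(3t-1)}(q^m, -q^{\binom{3t}{2}+3t-1};q)$ and $f_{1,1,3t}(x^{\pm 1}q, (-1)^{t+1}q^{3tm-m+1};q)$ appearing in (\ref{mf4}).

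Next, Theorem \ref{mortf} is applied to $f_{1,1,3t}$ to expand it as a sum over $0 \leq i \leq 3t-1$ of false-theta pieces with theta coefficients of conductor $q^{3t(3t-1)}$. When this expansion is multiplied by the $(1,3t,3t(3t-1))$ Hecke sum and the coupling from the $(1-q^{\cdots})$ split is inserted, the combined triple sign sum reassembles, for $0 \leq i \leq 3t-2$, as $\frak{g}_{1,1,3t,1,3t,1}(x^{\pm 1}q, (-1)^{t+1}q^{3mt+1-m}, q^{i+1};q)$ with coefficient $\Theta(-q^{\binom{3t}{2}+3t-1+3ti}; q^{3t(3t-1)})$ inherited from the $f$-expansion; a separate degenerate piece in which the auxiliary inner sum factors cleanly contributes the $f \cdot f$ product on the first line of (\ref{mf4}). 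The trailing constant $-\Theta(-q^{\binom{3t-1}{2}}; q^{3t(3t-1)})/(q)_{\infty}^2$ is a residual contribution from the $r=0$ stratum, where the ratio $(1-q^{\cdots})/(1-x)$ collapses and the resulting $s$-sum is a $\Theta$-series in base $q^{3t(3t-1)}$.

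The chief obstacle is combinatorial bookkeeping: verifying that the shifted arguments $(-1)^{t+1}q^{3tm-m+1}$ of the $\frak{g}$-sums, the theta conductors $q^{3t(3t-1)}$, and the accompanying signs $(-1)^i q^{\binom{i+1}{2}+mi}$ all line up after the various index translations. A secondary technical difficulty is isolating the degenerate slice of the $i$-expansion cleanly so that the final form has only the two $f$-factors in the first term and only $\frak{g}$-factors in the middle sum. As a sanity check, specializing $t=m=1$ should reduce (\ref{mf4}) to (\ref{base4}), which simultaneously validates the bookkeeping and pins down the normalization of the $\frak{g}$-sums.
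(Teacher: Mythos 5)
Your proposal does not address the statement it is meant to prove. The statement is Theorem \ref{mortf}, Mortenson's general structural formula expressing an \emph{arbitrary} Hecke-type double sum $f_{a,b,c}(x,y;q)$ with $D=b^2-ac<0$ as a finite sum of theta functions times false theta series. What you have sketched instead is a proof of Theorem \ref{main4}: every ingredient in your outline --- the factor $(1-q^{(3t-2m)r+2\binom{3t-1}{2}s+\binom{3t-1}{2}})$ in (\ref{gen3}), the decomposition (\ref{split}), the triples $(1,3t,3t(3t-1))$ and $(1,1,3t)$, the triple sums $\frak{g}_{1,1,3t,1,3t,1}$, the constant $-\Theta(-q^{\binom{3t-1}{2}};q^{3t(3t-1)})/(q)_{\infty}^2$, and the sanity check against (\ref{base4}) --- belongs to that theorem, not to Theorem \ref{mortf}. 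Worse, your second paragraph explicitly \emph{applies} Theorem \ref{mortf} to $f_{1,1,3t}$ as a tool, so read as a proof of Theorem \ref{mortf} the argument is circular: you assume the very identity (\ref{ftofalse}) you are asked to establish. Note also that the paper itself offers no proof of this statement; it is imported verbatim from \cite{mortfalse}, so there is no ``same template as Theorems \ref{main1}--\ref{main3}'' to follow here --- those proofs are consumers of Theorem \ref{mortf}, not models for it.

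A genuine proof must work directly with the double sum (\ref{hecke}) and is structurally quite different from anything in Section 3. The natural route, and the one behind \cite{mortfalse}, starts from the definition (\ref{sg}), which gives
\begin{equation*}
f_{a,b,c}(x,y;q)=\frac{1}{2}\Bigl(\sum_{r,s\in\mathbb{Z}}\operatorname{sg}(r)+\sum_{r,s\in\mathbb{Z}}\operatorname{sg}(s)\Bigr)(-1)^{r+s}x^{r}y^{s}q^{a\binom{r}{2}+brs+c\binom{s}{2}},
\end{equation*}
where, since $D<0$ makes the quadratic form positive definite, each half converges absolutely and the unsigned inner index may be summed first: in the $\operatorname{sg}(r)$ half the $s$-sum is exactly $\Theta(q^{br}y;q^{c})$ by (\ref{theta}). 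One then writes $r=i+cn$ with $0\le i\le c-1$, uses $\operatorname{sg}(i+cn)=\operatorname{sg}(n)$, and applies the quasi-periodicity (\ref{theta1}) to reduce $\Theta(q^{bi+bcn}y;q^{c})$ to $\Theta(q^{bi}y;q^{c})$; collecting exponents produces precisely the false theta factor $\sum_{n}\operatorname{sg}(n)\bigl(q^{c\binom{b+1}{2}-a\binom{c+1}{2}-iD}(-x)^{c}/(-y)^{b}\bigr)^{n}q^{-cD\binom{n+1}{2}}$, convergent because $-cD>0$. The $\operatorname{sg}(s)$ half is identical with $(x,a)$ and $(y,c)$ interchanged, yielding the first line of (\ref{ftofalse}). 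None of this bookkeeping appears in your proposal, so as a proof of Theorem \ref{mortf} it contains no usable content.
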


We continue with a result concerning identities satisfied by $\text{sg}(r,s)$. We omit the proof. Let
\begin{equation*}
\delta(r) := 
\begin{cases}
1 & \text{if $r=0$}, \\
0 & \text{$\text{otherwise.}$}
\end{cases}
\end{equation*}
\begin{lemma} For $r$, $s$, $t \in \mathbb{Z}$ with $t \geq 1$, we have
\begin{equation} \label{prop0}
\textup{sg}(-r,-s-1) = -\textup{sg}(r,s) + \delta(r)
\end{equation}
and
\begin{equation} \label{prop1}
\textup{sg}(r-1,s+2t)=\textup{sg}(r,s) - \delta(r) + \sum_{i=1}^{2t} \delta(s+i).
\end{equation}
\end{lemma}
We now recall the theta function identities
\begin{equation} \label{theta2}
\Theta(q^n; q) = 0,
\end{equation}
\begin{equation} \label{theta1}
\Theta(q^n x; q) = (-1)^n q^{-\binom{n}{2}} x^{-n} \Theta(x;q)
\end{equation}
and
\begin{equation} \label{theta3}
\sum_{k\in\mathbb{Z}}\frac{(-1)^kq^{\frac{1}{2}k^2+(n+\frac{1}{2})k}}{1-xq^k} = \frac{(q)^3_{\infty}}{x^n\Theta(x;q)}
\end{equation}
where $n \in \mathbb{Z}$. Next, we turn to providing alternative expressions for (\ref{gen2}) and (\ref{gen41}) which will be beneficial in the proofs of Theorems \ref{main2} and \ref{main3}, respectively. Let 
\begin{equation} \label{piece}
\mathcal{H}_{t}^{(m)}(x;q) := \sum_{r,s \in \mathbb{Z}} \text{sg}(r,s) (-1)^{r+s} \frac{q^{\binom{r+1}{2} + 2trs + \binom{s+1}{2} + (t-m)r}}{1-xq^r}
\end{equation}
and
\begin{equation} \label{kappa}
\kappa_{t,m}(x;q) := \sum_{r,s \in \mathbb{Z}} \text{sg}(r,s) (-1)^{r+s} \frac{q^{2\binom{r}{2} + 2trs + \binom{s}{2} + 2tr + 2ms}}{1-xq^{2r+1}}. 
\end{equation}
\begin{proposition} \label{help} We have
\begin{equation} \label{seconddecompose}
h_{t,m}(x;q) = \mathcal{H}_{t}^{(m)}(x;q) - x^{-1} \mathcal{H}_{t}^{(m)}(x^{-1};q)
\end{equation}
and
\begin{equation} \label{split}
k_{t,m}(x;q) = x^{-1} q^{-1} \left( \kappa_{t,m}(x;q) - f_{2,2t,1}(q^{2t}, q^{2m}; q) \right).
\end{equation}
\end{proposition}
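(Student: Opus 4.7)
The plan is to unfold the $\pm$ sign in the bracketed factor of (\ref{gen2}) and (\ref{gen3}), rewrite each of the two resulting pieces as an unrestricted double sum via $\textup{sg}(r,s)$, and transport the second piece into an image of the first under the involutive change of variables $(r,s) \mapsto (-r, -s-1)$.

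For (\ref{seconddecompose}), expanding $(1 + q^{2mr})$ gives $h_{t,m}(x;q) = H_1 + H_2$ where $H_1$ and $H_2$ carry $q^{(t-m)r}$ and $q^{(t+m)r}$ respectively. Since $\textup{sg}(r,s)$ equals $+1$ on $r,s \geq 0$, $-1$ on $r,s < 0$, and $0$ otherwise, the restricted sum $H_1$ is exactly $\mathcal{H}_{t}^{(m)}(x;q)$ from (\ref{piece}). To match $-x^{-1} \mathcal{H}_{t}^{(m)}(x^{-1};q)$ with $H_2$, apply $(r,s) \mapsto (-r, -s-1)$ inside $\mathcal{H}_{t}^{(m)}(x^{-1};q)$; the elementary identities $\binom{-r+1}{2} = \binom{r}{2}$, $\binom{-s}{2} = \binom{s+1}{2}$, $(-1)^{-r-s-1} = -(-1)^{r+s}$, and $\frac{x^{-1}}{1 - x^{-1} q^{-r}} = -\frac{q^r}{1 - xq^r}$ reshape the summand so that the quadratic and linear exponents land on $\binom{r+1}{2} + 2trs + \binom{s+1}{2} + (t+m)r$. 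The factor $\textup{sg}(-r, -s-1)$ then splits via (\ref{prop0}) as $-\textup{sg}(r,s) + \delta(r)$: the $-\textup{sg}(r,s)$ piece reproduces $H_2$, while the $\delta(r)$ contribution collapses to a multiple of $\sum_{s \in \mathbb{Z}} (-1)^s q^{\binom{s+1}{2}} = -\Theta(1;q) = 0$ by (\ref{theta2}).

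For (\ref{split}), the analogous expansion of the bracketed factor in (\ref{gen3}) writes $\ell_{t,m}(x;q) = L_1 - L_2$ with $L_1 = \Phi_{t}^{(m)}(x;q)$, and the same substitution applied to $-x^{-1} \Phi_{t}^{(m)}(x^{-1};q)$ matches the exponents of $L_2$ term by term; in particular the coefficient of $s$ compresses to $\binom{3t}{2} + 3t - 1 + 2\binom{3t-1}{2}$ and the constant to $\binom{3t-1}{2}$. The main $\textup{sg}(r,s)$ piece yields $-L_2$, while the $\delta(r)$ residue becomes $\frac{1}{1-x} \sum_{s \in \mathbb{Z}} q^{3t(3t-1) \binom{s}{2} + (\binom{3t}{2} + 3t - 1 + 2\binom{3t-1}{2}) s + \binom{3t-1}{2}}$; the reindexing $s \mapsto s - 1$ completes the square and rewrites this residue as $\frac{1}{1-x} \Theta(-q^{\binom{3t-1}{2}}; q^{3t(3t-1)})$, which is precisely the corrective term in (\ref{split}).

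The main obstacle is the exponent bookkeeping in the $\Phi_t^{(m)}$ case: both the term-by-term agreement with $L_2$ and the theta-function identification of the $r = 0$ residue must land on the nose. Beyond (\ref{prop0}) and (\ref{theta2}), no deeper analytic input is required, so the argument is ultimately a disciplined calculation.
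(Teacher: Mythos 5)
Your proof is correct and follows essentially the same route as the paper: your single substitution $(r,s)\mapsto(-r,-s-1)$ is exactly the composition of the paper's two steps $(r,s)\to(-r-1,-s-1)$ and $r\to r-1$, and your use of (\ref{prop0}) together with the vanishing of $\sum_{s}(-1)^s q^{\binom{s+1}{2}}$ (resp.\ the identification of the $r=0$ residue as $\Theta(-q^{\binom{3t-1}{2}};q^{3t(3t-1)})$ after the shift $s\mapsto s-1$) matches the paper's argument, with all exponent bookkeeping checking out. The only nitpick is the phrase that the residue \emph{is} the corrective term in (\ref{split}): the residue $+\tfrac{1}{1-x}\Theta(-q^{\binom{3t-1}{2}};q^{3t(3t-1)})$ arises as a summand of $-x^{-1}\Phi_{t}^{(m)}(x^{-1};q)$ and therefore enters (\ref{split}) with the opposite sign once rearranged, which is what your computation in fact delivers.
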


\begin{proof}
We first let $(r, s) \to (-r-1, -s-1)$ in $\mathcal{H}_{t}^{(m)}(x^{-1};q)$ and simplify to obtain
\begin{equation} \label{hsim}
-xq^{1+m+t} \sum_{r,s \in \mathbb{Z}} \text{sg}(-r-1, -s-1) (-1)^{r+s} \frac{q^{\binom{r+1}{2} + 2trs + \binom{s+1}{2} + (m+t+1)r + 2ts}}{1-xq^{r+1}}.
\end{equation}
Next, applying $r \to r-1$ to (\ref{hsim}), then using (\ref{theta}) and (\ref{prop0}) yields
\begin{equation*}
 -x \sum_{r,s \in \mathbb{Z}} \text{sg}(r,s) (-1)^{r+s} \frac{q^{\binom{r+1}{2} + 2trs + \binom{s+1}{2} + (m+t)r}}{1-xq^r}
 \end{equation*}
and thus (\ref{seconddecompose}) follows. Using (\ref{hecke}), one observes that
\begin{equation*}
f_{2,2t,1}(q^{2t}, q^{2m}; q) = \kappa_{t,m}(x;q) - xq k_{t,m}(x;q)
\end{equation*} 
and so (\ref{split}) follows.
\end{proof}

\section{Proof of Theorems \ref{main1}--\ref{main4}}

The method of proof is as follows \cite{mz}. First, we derive functional equations for each of (\ref{gen1}), (\ref{gen2}), (\ref{gen41}), (\ref{gen5}),
\begin{equation} \label{t1}
\hat{g}_{t,m}(x;q) := \Theta(x;q) g_{t,m}(x;q),
\end{equation}
\begin{equation} \label{t2}
\hat{\mathcal{H}}_{t}^{(m)}(x;q) := \Theta(x;q) \mathcal{H}_{t}^{(m)}(x;q),
\end{equation}
\begin{equation} \label{t3}
\hat{\kappa}_{t,m}(x;q) := \Theta(qx;q^2) {\kappa}_{t,m}(x;q)
\end{equation}
and
\begin{equation} \label{t4}
\hat{p}_{t,m}(x;q) := \Theta(x;q) p_{t,m}(x;q).
\end{equation}
Suitable care is required in constructing the sums (\ref{sum}), (\ref{sum1}), (\ref{sum3}) and (\ref{sum4}) which favorably decompose in order to obtain these functional equations. We then express each of (\ref{t1})--(\ref{t4}) as a Laurent series in $x \in \mathbb{C} \setminus \{ 0\}$ and use the functional equations to find an explicit formula for the coefficients in the Laurent series expansion. After some sitzfleisch, these calculations eventually yield the right-hand sides of (\ref{mf1}), (\ref{mf2}), (\ref{mf3}) and (\ref{mf4}). For the first case, we begin with the following result. 

\begin{proposition} For $t \in \mathbb{N}$ and $m \in \mathbb{Z}$ with $t \equiv m \pmod{2}$, we have
\begin{equation} \label{r1}
\begin{aligned}
g_{t,m}(qx;q) &= -x^{1-4t^2}q^{\frac{t-m-2t^2 -2tm}{2}} g_{t,m}(x;q) \\
& - x^{1-4t^2 - \frac{t+m}{2}} q^{\frac{t-m-2t^2-2tm}{2}} \frac{(q)_{\infty}^{3}}{\Theta(x;q)}\sum_{i=1}^{2t}(-1)^iq^{\frac{i^2}{2}-\frac{1+t-m}{2}i}x^{2ti}\\
&- x^{1-4t^2} q^{1-4t^2} \sum_{k=0}^{4t^2-2}x^kq^{k} f_{1,2t,1}(q^{2-4t^2 + \frac{t+m}{2} + k}, q^{1 + \frac{t-m}{2}}; q)
\end{aligned}
\end{equation}
and 
\begin{equation} \label{r2}
\begin{aligned}
\hat{g}_{t,m}(qx;q) &= x^{-4t^2}q^{\frac{t-m-2t^2-2tm}{2}} \hat{g}_{t,m}(x;q) \\
& + x^{-4t^2-\frac{t+m}{2}}q^{\frac{t-m-2t^2-2tm}{2}}(q)^3_{\infty}\sum_{i=1}^{2t}(-1)^iq^{\frac{i^2}{2}-\frac{1+t-m}{2}i}x^{2ti}\\
& + x^{-4t^2} q^{1-4t^2}\Theta(x;q)\sum_{k=0}^{4t^2-2}x^{k} q^{k} f_{1,2t,1}(q^{2-4t^2 + \frac{t+m}{2} + k}, q^{1 + \frac{t-m}{2}}; q).
 \end{aligned}
\end{equation}
\end{proposition}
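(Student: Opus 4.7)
The plan is to establish the functional equation (\ref{r1}) by applying a carefully chosen shift of summation indices to $g_{t,m}(qx;q)$, and then to deduce (\ref{r2}) from (\ref{r1}) by multiplying through by $\Theta(qx;q)$. Writing $E(r,s) := \tfrac{r^2}{2} + 2trs + \tfrac{s^2}{2} + \tfrac{t+1+m}{2}r + \tfrac{t+1-m}{2}s$ for the quadratic form appearing in (\ref{gen1}), so that
\[
g_{t,m}(x;q) = \sum_{r,s\in\mathbb{Z}} \textup{sg}(r,s)(-1)^{r+s}\frac{q^{E(r,s)}}{1-xq^r},
\]
the critical shift is $(r,s) \mapsto (r-1, s+2t)$. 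Since $2t$ is even, this produces only a single overall sign change coming from $(-1)^{r-1}$, and a direct calculation gives the bilinear identity $E(r-1, s+2t) = E(r,s) + (4t^2-1)r + \tfrac{t-m-2t^2-2tm}{2}$. Applying this shift to $g_{t,m}(qx;q)$, so that the denominator $1-xq^{r+1}$ is converted to $1-xq^r$, and then splitting the extra factor via the finite geometric identity
\[
\frac{q^{(4t^2-1)r}}{1-xq^r} = \frac{x^{1-4t^2}}{1-xq^r} - \sum_{k=0}^{4t^2-2} x^{k+1-4t^2} q^{kr},
\]
partitions the computation into a ``main'' piece retaining a $1/(1-xq^r)$ factor and a ``geometric'' piece that is polynomial in $q^r$.

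For the main piece I invoke (\ref{prop1}) to decompose $\textup{sg}(r-1,s+2t) = \textup{sg}(r,s) - \delta(r) + \sum_{i=1}^{2t}\delta(s+i)$. The $\textup{sg}(r,s)$-part reassembles $g_{t,m}(x;q)$, producing the first summand $-x^{1-4t^2}q^{(t-m-2t^2-2tm)/2}g_{t,m}(x;q)$ of (\ref{r1}). The $-\delta(r)$ contribution reduces to $\tfrac{1}{1-x}\Theta(q^{1+(t-m)/2};q)$, which vanishes via (\ref{theta2}) because the parity hypothesis $t \equiv m \pmod{2}$ makes the argument an integral power of $q$. Each $\delta(s+i)$ boundary specialises the inner sum over $r$ to the shape appearing in (\ref{theta3}), which evaluates it to $(q)_\infty^3/\Theta(x;q)$ times a monomial $x^{2ti-(t+m)/2}$; summing against $(-1)^i q^{i^2/2 - (1+t-m)i/2}$ for $i = 1,\dots,2t$ yields the second summand of (\ref{r1}).

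For the geometric piece the absence of a $1/(1-xq^r)$ factor makes it clean to \emph{undo} the shift via the bijection $(r,s) \mapsto (r+1, s-2t)$ of $\mathbb{Z}^2$, which introduces no $\delta$-corrections. Using the analogous identity $E(r+1, s-2t) = E(r,s) - (4t^2-1)r + 1 + (t+m)/2 - 3t^2 - t + tm$ together with the definition (\ref{hecke}), each inner sum (for fixed $k$) repackages directly as $f_{1,2t,1}(q^{2-4t^2+(t+m)/2+k}, q^{1+(t-m)/2};q)$, and the combined $q$-prefactor collapses to $q^{k+1-4t^2}$ by a small exponent computation, yielding the third summand of (\ref{r1}). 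Equation (\ref{r2}) then follows immediately on multiplying (\ref{r1}) by $\Theta(qx;q) = -x^{-1}\Theta(x;q)$ (the latter being (\ref{theta1}) with $n=1$): the sign flip absorbs the three minus signs in (\ref{r1}), and the $\Theta(x;q)$ in the denominator of the second summand cancels against the multiplier. The main obstacle is pure bookkeeping: verifying the two quadratic shift identities for $E(r\pm 1,s\mp 2t)-E(r,s)$ and tracking the $q$- and $x$-prefactors through each contribution so that the final exponents line up with (\ref{r1}) on the nose — in particular, the nontrivial collapse of the prefactor in the geometric piece to $q^{1-4t^2+k}$ is what encodes the fact that $(r,s) \mapsto (r-1, s+2t)$ is the correct shift.
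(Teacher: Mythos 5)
Your argument is correct and is essentially the paper's proof in a slightly reorganized form: computing the auxiliary sum (\ref{sum}) in two ways is algebraically the same as your ``shift first, then split the kernel $q^{(4t^2-1)r}/(1-xq^r)$'' device, and you use the identical ingredients --- the shift $(r,s)\to(r-1,s+2t)$, the decomposition (\ref{prop1}), the vanishing via (\ref{theta2}), the evaluation (\ref{theta3}), the finite geometric series (\ref{geo}), and (\ref{theta1}) to pass from (\ref{r1}) to (\ref{r2}). All the exponent bookkeeping you flag (the constants $\frac{t-m-2t^2-2tm}{2}$ and the collapse to $q^{k+1-4t^2}$) checks out.
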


\begin{proof}
The idea is to compute the sum
\begin{equation} \label{sum}
x^{4t^2-1}q^{\frac{2t^2-t+2tm+m}{2}}\sum_{r,s \in \mathbb{Z}} \text{sg}(r,s)(-1)^{r+s}q^{\frac{r^2}{2}+2trs+\frac{s^2}{2}+\frac{1+t+m}{2}r+\frac{1+t-m}{2}s}\frac{1-x^{1-4t^2}q^{(r+1)(1-4t^2)}}{1-xq^{r+1}}
\end{equation}
in two ways. Expanding the numerator yields
\begin{equation} \label{expand}
x^{4t^2-1}q^{\frac{2t^2-t+2tm+m}{2}} g_{t,m}(qx;q) - \sum_{r,s \in \mathbb{Z}}\text{sg}(r,s)(-1)^{r+s}\frac{q^{\frac{r^2}{2}+2trs+\frac{s^2}{2}+\frac{1+t+m}{2}r+\frac{1+t-m}{2}s+ 1- 3t^2 - \frac{t}{2} + tm + \frac{m}{2}}}{1-xq^{r+1}}.
\end{equation}
Taking $(r,s) \to (r-1, s+2t)$ in the second sum in (\ref{expand}) and using (\ref{prop1}), (\ref{theta2}) and (\ref{theta3}) leads to
\begin{equation} \label{oneside}
\begin{aligned}
& -\sum_{r,s \in \mathbb{Z}} \text{sg}(r-1,s+2t)(-1)^{r+s}\frac{q^{\frac{r^2}{2}+2trs+\frac{s^2}{2}+\frac{1+t+m}{2}r+\frac{1+t-m}{2}s}}{1-xq^r} \\
& = - \sum_{r,s \in \mathbb{Z}} \text{sg}(r,s)(-1)^{r+s}\frac{q^{\frac{r^2}{2}+2trs+\frac{s^2}{2}+\frac{1+t+m}{2}r+\frac{1+t-m}{2}s}}{1-xq^r} + \frac{1}{1-x}\sum_{s\in\mathbb{Z}}(-1)^sq^{\frac{s^2+(1+t-m)s}{2}} \\
& - \sum_{i=1}^{2t}\sum_{r \in \mathbb{Z}}(-1)^{r-i}\frac{q^{\frac{r^2}{2}-2tri+\frac{i^2}{2}+\frac{1+t+m}{2}r-\frac{1+t-m}{2}i}}{1-xq^r} \\
& = - g_{t,m}(x;q) - \frac{x^{-\frac{t+m}{2}}(q)_{\infty}^{3}}{\Theta(x;q)}\sum_{i=1}^{2t}(-1)^iq^{\frac{i^2}{2}-\frac{1+t-m}{2}i}x^{2ti}. 
\end{aligned}
\end{equation}
Alternatively, we use
\begin{equation} \label{geo}
\frac{1-x^{1-4t^2}q^{(r+1)(1-4t^2)}}{1-xq^{r+1}} = -x^{1-4t^2}q^{(r+1)(1-4t^2)}\sum_{k=0}^{4t^2-2}x^kq^{k(r+1)}
\end{equation}
to express (\ref{sum}) as
\begin{equation} \label{otherside}
\begin{aligned}
&  -q^{\frac{-6t^2-t+2tm+m}{2}+1}\sum_{k=0}^{4t^2-2}x^kq^{k}\sum_{r,s \in \mathbb{Z}} \text{sg}(r,s)(-1)^{r+s} q^{\frac{r^2}{2}+2trs+\frac{s^2}{2}+\frac{1+t+m+2-8t^2+2k}{2}r+\frac{1+t-m}{2}r + r(1-4t^2)} \\
&= -q^{\frac{-6t^2-t+2tm+m}{2}+1}\sum_{k=0}^{4t^2-2}x^kq^{k} f_{1,2t,1}(q^{2-4t^2 + \frac{t+m}{2} + k}, q^{1 + \frac{t-m}{2}}; q).
\end{aligned}
\end{equation}
Combining (\ref{expand}), (\ref{oneside}) and (\ref{otherside}) gives us (\ref{r1}). Finally, (\ref{r2}) follows from (\ref{theta1}), (\ref{t1}) and (\ref{r1}).
\end{proof}
We are now in a position to prove our first result. 
\begin{proof}[Proof of Theorem \ref{main1}]
Note that $\hat{g}_{t,m}(x) = \hat{g}_{t,m}(x;q)$ does not have poles and so we may write
\begin{equation} \label{L}
\hat{g}_{t,m}(x)  = \sum_{r \in \mathbb{Z}} (-1)^r q^{\frac{r^2}{8t^2}+\frac{t-m+2t^2-2tm}{8t^2}r}a_rx^{-r}
\end{equation}
for all $x \in \mathbb{C} \setminus \{ 0 \}$. Substituting (\ref{L}) into (\ref{r2}), we obtain 
\begin{equation} \label{ar}
 \begin{aligned}
\sum_{r \in \mathbb{Z}} (-1)^r q^{\frac{r^2}{8t^2} + \frac{t-m+2t^2-2tm}{8t^2}r-r}a_rx^{-r} &= x^{-4t^2}q^{\frac{t-m-2t^2-2tm}{2}}\sum_{r \in \mathbb{Z}}(-1)^rq^{\frac{r^2}{8t^2}+\frac{t-m+2t^2-2tm}{8t^2}r}a_rx^{-r} \\ 
&+ x^{-4t^2-\frac{t+m}{2}}q^{\frac{t-m-2t^2-2tm}{2}}(q)^3_{\infty}\sum_{i=1}^{2t}(-1)^iq^{\frac{i^2}{2}-\frac{1+t-m}{2}i}x^{2ti}\\
&+x^{-4t^2} q^{1-4t^2}\Theta(x;q) \\
& \qquad \qquad \qquad \quad \times \sum_{k=0}^{4t^2-2}x^{k}q^k f_{1,2t,1}(q^{2-4t^2 + \frac{t+m}{2} + k}, q^{1 + \frac{t-m}{2}}; q).
\end{aligned}
\end{equation}
Using 
\begin{equation} \label{id}
\binom{a-b}{2} = \binom{a}{2} - ab + \binom{b+1}{2}
\end{equation}
and (\ref{theta}), one can check that the last sum on the right-hand side of (\ref{ar}) can be written as
\begin{equation} \label{switch}
q^{1-4t^2} \sum_{r \in \mathbb{Z} }(-1)^r q^{\binom{r+1}{2}} \sum_{k=0}^{4t^2-2}(-1)^k q^{\binom{k+1-4t^2}{2} + r(k-4t^2)+k} f_{1,2t,1}(q^{2-4t^2 + \frac{t+m}{2} + k}, q^{1 + \frac{t-m}{2}}; q) x^{-r}.
\end{equation}
We now let $r \to r-4t^2$ in the first term on the right-hand side of (\ref{ar}), apply (\ref{switch}) and then compare coefficients of $x^{-r}$ in the resulting expressions to arrive at the recurrence relation
\begin{equation} \label{recur1}
a_r = a_{r-4t^2} + b_{r}^{\prime} + c_{r}^{\prime}
\end{equation}
where
\begin{equation*}
\begin{aligned}
b_{r}^{\prime} & := q^{1-4t^2 + \binom{r+1}{2}-\frac{r^2}{8t^2}-\frac{t-m+2t^2-2tm-8t^2}{8t^2}r-4t^2r}\sum_{k=0}^{4t^2-2}(-1)^k q^{\binom{k+1-4t^2}{2} + k(r +1)} \\
&  \qquad \qquad \qquad \qquad \qquad \qquad  \qquad \qquad \qquad \quad  \times f_{1,2t,1}(q^{2-4t^2 + \frac{t+m}{2} + k}, q^{1 + \frac{t-m}{2}}; q)
\end{aligned}
\end{equation*}
and
\begin{equation*}
c_{r}^{\prime} := (-1)^{i + \frac{t+m}{2}} (q)_{\infty}^3 q^{\frac{i^2}{2} - \frac{1+t-m}{2}i - \frac{\left(4t^2 + \frac{t+m}{2} - 2ti \right)^2}{8t^2} - \frac{t-m + 2t^2 - 2tm - 8t^2}{8t^2} \left(4t^2 + \frac{t+m}{2} - 2ti \right) + \frac{t-m-2t^2 - 2tm}{2}} 
\end{equation*}
if $r=4t^2 + \frac{t+m}{2} - 2ti$, $1\leq i \leq 2t$, and is 0 otherwise. Moreover, using (\ref{gen1}), (\ref{t1}) and Cauchy's integral formula applied to (\ref{L}), a short calculation gives
\begin{equation} \label{arform}
\begin{aligned}
a_r & = -\frac{1}{2 \pi i} q^{\frac{4t^2 - 1}{8t^2} r^2 - \frac{t-m+2t^2 - 2tm}{8t^2}r} \oint \sum_{\lambda \in \mathbb{Z}} (-1)^{\lambda} q^{\binom{\lambda + 1}{2} - \lambda r} \\
& \qquad \qquad \qquad \qquad  \qquad \qquad \qquad \quad \times \sum_{n,s \in \mathbb{Z}} \text{sg}(n,s) (-1)^{n+s} \frac{q^{\frac{n^2}{2} + 2tns + \frac{s^2}{2} + \frac{t+1+m}{2}n + \frac{t+1-m}{2}s}}{1-zq^n} \,\, dz
\end{aligned}
\end{equation}
where the integration is over a closed contour around $0$ in $\mathbb{C}$. Thus, as $| q |< 1$, 
\begin{equation} \label{keylim}
\lim_{r \to \pm \infty} a_r = 0.
\end{equation}
Now, observe that (\ref{recur1}) is equivalent to
\begin{equation} \label{recur2}
a_r - a_{r+4t^2} = b_r + c_r
\end{equation}
where $b_r := -b_{r+4t^2}^{\prime}$ and $c_{r} := -c_{r+4t^2}^{\prime}$. We now claim that
\begin{equation} \label{claim1}
a_r = \sum_{l \in \mathbb{Z}} \text{sg}(r,l) b_{r+4t^2l}.
\end{equation}
To deduce this, we let $\alpha_r := q^{\frac{r^2}{8t^2} + \frac{t-m + 2t^2 - 2tm}{8t^2}r} a_r$ and use (\ref{recur2}) to obtain
\begin{equation} \label{alpharecur}
\alpha_r = q^{-r-2t^2-\frac{t-m+2t^2-2tm}{8t^2}}\alpha_{r+4t^2}+q^{\frac{r^2}{8t^2}+\frac{t-m+2t^2-2tm}{8t^2}r}b_r+q^{\frac{r^2}{8t^2}+\frac{t-m+2t^2-2tm}{8t^2}r}c_r. 
\end{equation}
In fact, we will demonstrate 
\begin{equation*} \label{claim2}
\alpha_r = q^{\frac{r^2}{8t^2}+\frac{t-m+2t^2-2tm}{8t^2}r}\sum_{l\in\mathbb{Z}} \text{sg}(r,l) b_{r+4t^2l}
\end{equation*}
which clearly implies (\ref{claim1}). Let
\begin{equation*}
\Tilde{a}_r := \sum_{l \in \mathbb{Z}} \text{sg}(r,l)b_{r+4t^2l}
\end{equation*}
and
\begin{equation*}
\Tilde{\alpha}_r := q^{\frac{r^2}{8t^2}+\frac{t-m+2t^2-2tm}{8t^2}r}\Tilde{a}_r.
\end{equation*}
Then $\Tilde{a}_r$ and $\Tilde{\alpha}_r$ satisfy (\ref{recur2}) and (\ref{alpharecur}), respectively. The former follows from
\begin{equation} \label{ch1}
\begin{aligned}	
 \Tilde{a}_r - \Tilde{a}_{r+4t^2} &= \sum_{l\in\mathbb{Z}}\left( \text{sg}(r,l) - \text{sg}(r+4t^2,l-1) \right) b_{r+4t^2l}\\
&=\sum_{l\in\mathbb{Z}} \left( \delta(l) - \delta(r+1) - \cdots - \delta(r+4t^2) \right) b_{r+4t^2l}\\
&= b_r - \left(\delta(r+1) + \cdots + \delta(r+4t^2) \right) \sum_{n \equiv r \;(\text{mod}\;4t^2)} b_n
 \end{aligned}
 \end{equation}
 and
\begin{equation} \label{ch2}
\sum_{n\equiv r\;(\text{mod}\;4t^2)}b_n = \sum_{n\equiv r\;(\text{mod}\;4t^2)}(a_n -a_{n+4t^2}-c_n) = -\sum_{n\equiv r\;(\text{mod}\;4t^2)} c_n =-c_r
\end{equation}
where we have used (\ref{keylim}), the definitions of $c_r$ and $c_{r}^{\prime}$ and that $-t \leq m \leq 3t-2$. Now, since $\lim_{r \to \pm \infty} \alpha_r = 0$ and $\lim_{r \to \infty} \Tilde{\alpha}_r = 0$, we have
\begin{equation} \label{limdiff}
\lim_{r \to \infty} \left( \alpha_r - \Tilde{\alpha}_r \right) = 0.
\end{equation}
Finally, we compute
\begin{equation*}
\alpha_r - \Tilde{\alpha}_r - q^{r+2t^2+\frac{t-m+2t^2-2tm}{8t^2}} \left( \alpha_{r-4t^2} - \Tilde{\alpha}_{r-4t^2} \right)  = 0
\end{equation*}
which in combination with (\ref{limdiff}) implies that $\alpha_r = \Tilde{\alpha}_{r}$ and so $a_r = \Tilde{a}_r$. In total,
\begin{equation} \label{end1}
\begin{aligned}
\hat{g}_{t,m}(x) & = \sum_{r \in \mathbb{Z}} (-1)^ r q^{\frac{r^2}{8t^2} + \frac{t-m+2t^2-2tm}{8t^2}r} a_r x^{-r} \\
& = \sum_{r \in \mathbb{Z}} (-1)^r q^{\frac{r^2}{8t^2} + \frac{t-m+2t^2-2tm}{8t^2}r} \sum_{l \in \mathbb{Z}} \text{sg}(r,l) b_{r+4t^2 l} x^{-r} \\
& = - q^{1-4t^2} \sum_{r,l \in \mathbb{Z}} \text{sg}(r,l) (-1)^r q^{\binom{r+4t^2 (l+1) + 1}{2} - \frac{(r+4t^2 (l+1))^2}{8t^2} - \frac{t-m + 2t^2 - 2tm - 8t^2}{8t^2} (r + 4t^2 (l+1))} \\
& \quad \times q^{- 4t^2 (r + 4t^2 (l+1))} \sum_{k=0}^{4t^2 - 2} (-1)^k q^{\binom{k+1 - 4t^2}{2} + k(r+ 4t^2(l+1) + 1) + \frac{r^2}{8t^2} + \frac{t-m+2t^2-2tm}{8t^2}r} \\
&  \qquad \qquad \qquad \qquad \qquad \qquad \times f_{1,2t,1}(q^{2-4t^2 + \frac{t+m}{2} + k}, q^{1 + \frac{t-m}{2}}; q) x^{-r} \\
& = q^{1-t^2 - 8t^4 - \frac{t-m}{2} + tm} \sum_{k=0}^{4t^2 - 2} (-1)^{k+1} q^{\binom{k+1-4t^2}{2} + k + 4t^2 k} f_{1,2t,1}(q^{2-4t^2 + \frac{t+m}{2} + k}, q^{1 + \frac{t-m}{2}}; q) \\
& \quad \times \sum_{r, l \in \mathbb{Z}} \text{sg}(r,l) (-1)^r q^{\frac{r^2}{2} + (4t^2 - 1)rl + 2t^2 l^2 (4t^2 - 1) + (k+\frac{1}{2})r + (t^2 - \frac{t-m}{2} + tm + 4t^2 k) l} x^{-r} \\
& = q^{1-3t^2 - \frac{t-m}{2} + tm} \sum_{k=0}^{4t^2 - 2} (-1)^{k+1} q^{\binom{k+1}{2} + k} f_{1,2t,1}(q^{2-4t^2 + \frac{t+m}{2} + k}, q^{1 + \frac{t-m}{2}}; q) \\
& \qquad \qquad \qquad \qquad \qquad \qquad  \times f_{1,4t^2 - 1, 4t^2 (4t^2 - 1)}(x^{-1} q^{k+1}, -q^{4t^2 k - t^2 + tm - \frac{t-m}{2} + 8t^4}; q).
\end{aligned}
\end{equation}
Thus, (\ref{mf1}) follows from (\ref{realgen1}), (\ref{t1}) and (\ref{end1}). We now apply (\ref{mzftom}) and (\ref{theta2}) to deduce that $f_{1,2t,1}$ is a modular form and (\ref{ftofalse}) to obtain that $f_{1, 4t^2-1, 4t^2(4t^2-1)}$ is a false theta function. 
\end{proof}
For the second case, we begin with the following result.
\begin{proposition} For $t \in \mathbb{N}$ and $m \in \mathbb{Z}$, we have
\begin{equation} \label{r3}
\begin{aligned}
\mathcal{H}_{t}^{(m)}(qx;q) & = -x^{1-4t^2} q^{m-2t^2} \mathcal{H}_{t}^{(m)}(x;q) - x^{1-4t^2 + m-t} q^{m-2t^2} \frac{(q)_{\infty}^3}{\Theta(x;q)} \sum_{i=1}^{2t} (-1)^i q^{\binom{i}{2}} x^{2ti} \\
& - x^{1-4t^2} q^{1-4t^2} \sum_{i=0}^{4t^2-2} x^k q^k f_{1,2t,1}(q^{t-m+2 -4t^2 +k}, q; q)
\end{aligned}
\end{equation}
and
\begin{equation} \label{r4}
\begin{aligned}
\hat{\mathcal{H}}_{t}^{(m)}(qx;q) & = x^{-4t^2} q^{m-2t^2} \hat{\mathcal{H}}_{t}^{(m)}(x;q) + x^{-4t^2 + m-t} q^{m-2t^2} (q)_{\infty}^3 \sum_{i=1}^{2t} (-1)^i q^{\binom{i}{2}} x^{2ti} \\
& + x^{-4t^2} q^{1-4t^2} \Theta(x;q) \sum_{i=0}^{4t^2-2} x^k q^k f_{1,2t,1}(q^{t-m+2 -4t^2 +k}, q; q).
\end{aligned}
\end{equation}
\end{proposition}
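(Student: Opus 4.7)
The plan is to parallel the proof of (\ref{r1})--(\ref{r2}), with $\mathcal{H}_t^{(m)}$ playing the role of $g_{t,m}$. The key object is the sum
\begin{equation*}
S := \sum_{r,s\in\mathbb{Z}}\text{sg}(r,s)(-1)^{r+s}\,q^{\binom{r+1}{2}+2trs+\binom{s+1}{2}+(t-m)r}\,\frac{1-x^{1-4t^2}q^{(r+1)(1-4t^2)}}{1-xq^{r+1}},
\end{equation*}
which I evaluate in two different ways. By (\ref{piece}), the summand with $1/(1-xq^{r+1})$ alone produces $\mathcal{H}_t^{(m)}(qx;q)$, so (unlike in (\ref{sum})) no $x$-power prefactor is needed in front of $S$.

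For the first evaluation I split the numerator of the fraction into its two pieces. The constant piece returns $\mathcal{H}_t^{(m)}(qx;q)$. In the other piece I perform the shift $(r,s)\to(r-1,s+2t)$, which turns the denominator into $1-xq^r$. A short computation verifies that under this shift the combined exponent collapses to $\binom{r+1}{2}+2trs+\binom{s+1}{2}+(t-m)r + (m-2t^2)$: the $r$-terms produced by the shift exactly cancel the $r$-terms coming from $(r+1)(1-4t^2)$, leaving only the clean global constant $q^{m-2t^2}$. Then (\ref{prop1}) splits $\text{sg}(r-1,s+2t)$ into $\text{sg}(r,s) - \delta(r) + \sum_{i=1}^{2t}\delta(s+i)$: the $\text{sg}(r,s)$-part reassembles into $x^{1-4t^2}q^{m-2t^2}\mathcal{H}_t^{(m)}(x;q)$; the $\delta(r)$-part vanishes because $\sum_{s\in\mathbb{Z}}(-1)^s q^{s(s+1)/2}=0$ by the involution $s\leftrightarrow-s-1$; and each $\delta(s+i)$-contribution is evaluated by (\ref{theta3}) to produce the $(q)_\infty^3/\Theta(x;q)$ residue term that appears in (\ref{r3}).

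For the second evaluation I apply the finite geometric series identity (\ref{geo}) to the fraction in $S$, interchange the $k$-sum with the $(r,s)$-sum, and reshape the exponent into the shape of (\ref{hecke}). This gives
\begin{equation*}
S = -x^{1-4t^2}q^{1-4t^2}\sum_{k=0}^{4t^2-2} x^k q^k\,f_{1,2t,1}(q^{t-m+2-4t^2+k},q;q).
\end{equation*}
Equating the two expressions for $S$ and isolating $\mathcal{H}_t^{(m)}(qx;q)$ yields (\ref{r3}). Finally, (\ref{r4}) follows at once by multiplying (\ref{r3}) through by $\Theta(qx;q)$, invoking $\Theta(qx;q)=-x^{-1}\Theta(x;q)$ (a case of (\ref{theta1})), and using the definition (\ref{t2}). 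The only delicate step is the exponent bookkeeping after the shift: once the collapse to the clean constant $m-2t^2$ is verified and the $\delta(r)$-residue is recognised as zero, the rest of the argument is a direct transcription of the template established for $g_{t,m}$.
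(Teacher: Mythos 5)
Your proposal is correct and follows essentially the same route as the paper: the paper evaluates the identical telescoping sum (its (\ref{sum1}), which is your $S$ up to the harmless prefactor $x^{4t^2-1}q^{2t^2-m}$) in two ways, using the shift $(r,s)\to(r-1,s+2t)$ with (\ref{prop1}), (\ref{theta2}) and (\ref{theta3}) on one side and the geometric expansion (\ref{geo}) on the other, then deduces (\ref{r4}) from (\ref{r3}) via (\ref{theta1}). Your exponent bookkeeping (the collapse to $q^{m-2t^2}$) and the vanishing of the $\delta(r)$ contribution both check out.
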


\begin{proof} We first compute the sum
\begin{equation} \label{sum1}
x^{4t^2-1}q^{2t^2-mt}\sum_{r,s \in \mathbb{Z}} \text{sg}(r,s) (-1)^{r+s} q^{ \binom{r+1}{2} +2trs+ \binom{s+1}{2}+ (t-m)r} \frac{1-x^{1-4t^2}q^{(r+1)(1-4t^2)}}{1-xq^{r+1}}
\end{equation}
in two ways. Expanding the numerator yields
\begin{equation} \label{expand1}
x^{4t^2-1}q^{2t^2 - m} \mathcal{H}_t^{(m)}(qx;q) - \sum_{r,s \in \mathbb{Z}} \text{sg}(r,s)(-1)^{r+s}\frac{q^{\binom{r+1}{2} +2trs+ \binom{s+1}{2} + (t-m)r+ (r+1)(1-4t^2) + 2t^2 - m}}{1-xq^{r+1}}.
\end{equation}
Taking $(r,s) \to (r-1, s+2t)$ in the second sum in (\ref{expand1}) and using (\ref{prop1}), (\ref{theta2}) and (\ref{theta3}) yields
\begin{equation} \label{oneside1}
\begin{aligned}
 -\sum_{r,s \in \mathbb{Z}} \text{sg}(r-1,s+2t)(-1)^{r+s} & \frac{q^{\binom{r+1}{2} + 2trs+ \binom{s+1}{2} + (t-m)r}}{1-xq^r}  \\
 & \qquad \qquad \qquad = - \mathcal{H}_t^{(m)}(x;q) - \frac{x^{m-t} (q)_{\infty}^{3}}{\Theta(x;q)} \sum_{i=1}^{2t} (-1)^i q^{\binom{i}{2}} x^{2ti}. 
 \end{aligned}
\end{equation}
Alternatively, we use (\ref{geo}) to express (\ref{sum1}) as
\begin{equation} \label{otherside1}
\begin{aligned}
-q^{1-2t^2-m} \sum_{k=0}^{4t^2-2} x^k q^{k} f_{1,2t,1}(q^{t-m+2-4t^2+k}, q; q).
\end{aligned}
\end{equation}
Combining (\ref{expand1})--(\ref{otherside1}) gives us (\ref{r3}). Finally, (\ref{r4}) follows from (\ref{theta1}) and (\ref{r3}). 
\end{proof}
We can now prove our second main result.
\begin{proof}[Proof of Theorem \ref{main2}]
As $\hat{\mathcal{H}}_{t}^{(m)}(x) = \hat{\mathcal{H}}_{t}^{(m)}(x;q)$ does not have poles, we write
\begin{equation} \label{L1}
\hat{\mathcal{H}}_{t}^{(m)}(x)  = \sum_{r \in \mathbb{Z}} (-1)^r q^{\frac{r^2}{8t^2} + \frac{mr}{4t^2}} a_r x^{-r}
\end{equation}
for all $x \in \mathbb{C} \setminus \{ 0 \}$. Substituting (\ref{L1}) into (\ref{r4}), we obtain 
\begin{equation} \label{ar1}
\begin{aligned}
\sum_{r \in \mathbb{Z}} (-1)^r q^{\frac{r^2}{8t^2} + \frac{mr}{4t^2} - r} a_r x^{-r} &= x^{-4t^2} q^{m-2t^2} \sum_{r \in \mathbb{Z}}(-1)^r q^{\frac{r^2}{8t^2}+\frac{mr}{4t^2}} a_r x^{-r} \\ 
&+ x^{-4t^2 + m-t} q^{m-2t^2} (q)^3_{\infty} \sum_{i=1}^{2t}(-1)^i q^{\binom{i}{2}} x^{2ti}\\
&+ x^{-4t^2} q^{1-4t^2} \Theta(x;q) \sum_{k=0}^{4t^2-2} x^{k} q^k f_{1,2t,1}(q^{t-m+2-4t^2+k}, q; q).
\end{aligned}
\end{equation}
Using (\ref{theta}) and (\ref{id}), the last sum on the right-hand side of (\ref{ar1}) can be written as
\begin{equation} \label{switch1}
q^{1-4t^2} \sum_{r \in \mathbb{Z} }(-1)^r q^{\binom{r+1}{2}} \sum_{k=0}^{4t^2-2}(-1)^k q^{\binom{k+1-4t^2}{2} + r(k-4t^2)+k} f_{1,2t,1}(q^{t-m+2-4t^2+k}, q; q) x^{-r}.
\end{equation}
We now let $r \to r-4t^2$ in the first term on the right-hand side of (\ref{ar1}), apply (\ref{switch1}) and then compare coefficients of $x^{-r}$ in the resulting expressions to arrive at the recurrence relation
\begin{equation} \label{newrecur}
a_r = a_{r-4t^2} + b_{r}^{\prime} + c_{r}^{\prime}
\end{equation}
where
\begin{equation*}
b_{r}^{\prime} := q^{1-4t^2 + \binom{r+1}{2}-\frac{r^2}{8t^2}- \frac{mr}{4t} + r(1 - 4t^2)} \sum_{k=0}^{4t^2-2}(-1)^k q^{\binom{k+1-4t^2}{2} +k(r+1)} f_{1,2t,1}(q^{t-m+2-4t^2+k}, q; q) 
\end{equation*}
and
\begin{equation*}
c_{r}^{\prime} := (-1)^{i+m+t} (q)_{\infty}^3 q^{m-2t^2 + \binom{i}{2} - \frac{(4t^2 -m+t- 2ti)^2}{8t^2} - \frac{m}{4t^2}(4t^2 - m + t -2ti) + (4t^2 -m+t-2ti)}
\end{equation*}
if $r=4t^2-m+t-2ti$, $1\leq i \leq 2t$, and is 0 otherwise. Moreover, a similar computation as in (\ref{arform}) implies
\begin{equation} \label{newkeylim1}
\lim_{r \to \pm \infty} a_r = 0.
\end{equation}
Now, observe that (\ref{newrecur}) is equivalent to
\begin{equation} \label{newrecur2}
a_r - a_{r+4t^2} = b_r + c_r
\end{equation}
where $b_r := -b_{r+4t^2}^{\prime}$ and $c_{r} := -c_{r+4t^2}^{\prime}$. We now claim that
\begin{equation} \label{newclaim}
a_r = \sum_{l \in \mathbb{Z}} \text{sg}(r,l) b_{r+4t^2l}.
\end{equation}
To deduce this, we let $\alpha_r := q^{\frac{r^2}{8t^2} + \frac{mr}{4t^2}} a_r$ and use (\ref{newrecur2}) to obtain
\begin{equation} \label{newalpharecur}
\alpha_r = q^{-r-2t^2-m} \alpha_{r+4t^2} + q^{\frac{r^2}{8t^2}+\frac{mr}{4t^2}} b_r + q^{\frac{r^2}{8t^2}+\frac{mr}{4t^2}} c_r. 
\end{equation}
We will show 
\begin{equation*} \label{newclaim2}
\alpha_r = q^{\frac{r^2}{8t^2}+\frac{mr}{4t^2}} \sum_{l\in\mathbb{Z}} \text{sg}(r,l) b_{r+4t^2l}
\end{equation*}
which clearly implies (\ref{newclaim}). Let
\begin{equation*}
\Tilde{a}_r := \sum_{l \in \mathbb{Z}} \text{sg}(r,l)b_{r+4t^2l}
\end{equation*}
and 
\begin{equation*}
\Tilde{\alpha}_r := q^{\frac{r^2}{8t^2}+\frac{mr}{4t^2}} \Tilde{a}_r.
\end{equation*}
Then $\Tilde{a}_r$ and $\Tilde{\alpha}_r$ satisfy (\ref{newrecur2}) and (\ref{newalpharecur}), respectively, via the same calculation
as in (\ref{ch1}) and (\ref{ch2}) where we use (\ref{newkeylim1}) and $1-t \leq m \leq t$. In addition,
\begin{equation} \label{limdiff2}
\lim_{r \to \infty} \left( \alpha_r - \Tilde{\alpha}_r \right) = 0.
\end{equation}
Finally, we observe
\begin{equation*}
\alpha_r - \Tilde{\alpha}_r - q^{r+2t^2+m} \left( \alpha_{r-4t^2} - \Tilde{\alpha}_{r-4t^2} \right)  = 0
\end{equation*}
which in combination with (\ref{limdiff2}) implies that $\alpha_r = \Tilde{\alpha}_{r}$ and so $a_r = \Tilde{a}_r$.
In total,
\begin{equation} \label{end2}
\begin{aligned}
\hat{\mathcal{H}}_{t}^{(m)}(x) & = \sum_{r \in \mathbb{Z}} (-1)^r q^{\frac{r^2}{8t^2} + \frac{mr}{4t^2}} a_r x^{-r} \\
& = \sum_{r \in \mathbb{Z}} (-1)^r q^{\frac{r^2}{8t^2} + \frac{mr}{4t^2}} \sum_{l \in \mathbb{Z}} \text{sg}(r, l) b_{r+4t^2 l} x^{-r} \\
& = q^{1-m-8t^4} \sum_{r, l \in \mathbb{Z}} \text{sg}(r,l) (-1)^r q^{\binom{r}{2} + (4t^2 - 1)rl + 4t^2 (4t^2 - 1) \binom{l}{2} + r + (8t^4 - m)l} \\
& \qquad \qquad \qquad \times \sum_{k=0}^{4t^2 - 2} (-1)^k q^{\binom{k+1-4t^2}{2} + kr + 4t^2 kl + 4t^2 k + k} f_{1,2t,1}(q^{t-m+2-4t^2+k}, q; q) \\
& = q^{1-m-2t^2} \sum_{k=0}^{4t^2 - 2} (-1)^{k+1} q^{\binom{k+1}{2} + k} f_{1, 2t, 1}(q^{t-m+2-4t^2+k}, q; q) \\
&\qquad \qquad \qquad \times \sum_{r, l \in \mathbb{Z}} \text{sg}(r, l) (-1)^r q^{\binom{r}{2} + (4t^2 - 1)rl + 4t^2 (4t^2 - 1) \binom{l}{2} + (k+1)r + (8t^4 -m + 4t^2k)l} x^{-r} \\
& = q^{1-m-2t^2} \sum_{k=0}^{4t^2 - 2} (-1)^{k+1} q^{\binom{k+1}{2}+k} f_{1, 2t, 1}(q^{t-m+2-4t^2+k}, q; q) \\
& \qquad \qquad \qquad \qquad \times f_{1, 4t^2-1, 4t^2 (4t^2 - 1)}(x^{-1} q^{k+1}, - q^{8t^4 - m + 4t^2 k}; q).
\end{aligned}
\end{equation}
Thus, (\ref{mf2}) follows from (\ref{realgen2}), (\ref{piece}), (\ref{seconddecompose}), (\ref{t2}) and (\ref{end2}). We now apply (\ref{mzftom}) and (\ref{theta2}) to deduce that $f_{1,2t,1}$ is a modular form and (\ref{ftofalse}) to obtain that $f_{1, 4t^2-1, 4t^2(4t^2-1)}$ is a false theta function. 
\end{proof}
For the third case, we begin with the following result.
\begin{proposition} For $t \in \mathbb{N}$ and $m \in \mathbb{Z}$, we have
\begin{equation} \label{r7}
\begin{aligned}
{\kappa}_{t,m}(q^2 x;q) &= -x^{1-2t} q^{3-4t^2 - 3t + 4tm} {\kappa}_{t,m}(x;q) \\
& - x^{-2t^2 - t + 2} q^{4-4t^2 - 4t + 4tm} \frac{(q^2; q^2)_{\infty}^3}{\Theta(qx;q^2)} \sum_{i=1}^{2t} (-1)^i q^{\binom{i+1}{2} - 2mi + ti} x^{ti} \\
& - x^{1-2t^2} q^{3-6t^2} \sum_{k=0}^{2t^2 - 2} x^k q^{3k} f_{2,2t,1}(q^{2t+2 - 4t^2 + 2k}, q^{2m}; q)
\end{aligned}
\end{equation}
and
\begin{equation} \label{r8}
\begin{aligned}
\hat{\kappa}_{t,m}(q^2 x; q) & = x^{-2t^2} q^{2-4t^2 - 3t + 4tm} \hat{\kappa}_{t,m}(x;q) \\
& + x^{-2t^2 - t+1} q^{3 - 4t^2 - 4t + 4tm} (q^2; q^2)_{\infty}^3 \sum_{i=1}^{2t} (-1)^i q^{\binom{i+1}{2} - 2mi + ti} x^{ti} \\
& + x^{-2t^2} q^{2-6t^2} \Theta(qx; q^2)  \sum_{k=0}^{2t^2 - 2} x^k q^{3k} f_{2,2t,1}(q^{2t+2 - 4t^2 + 2k}, q^{2m}; q).
\end{aligned}
\end{equation}
\end{proposition}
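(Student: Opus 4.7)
The strategy parallels the proofs of the functional equations for $g_{t,m}$ and $\mathcal{H}_t^{(m)}$. I would compute the sum
\begin{equation*}
S := x^{2t^2-1} q^{E} \sum_{r,s \in \mathbb{Z}} \text{sg}(r,s)(-1)^{r+s} q^{2\binom{r}{2} + 2trs + \binom{s}{2} + 2tr + 2ms} \cdot \frac{1 - x^{1-2t^2} q^{(2r+3)(1-2t^2)}}{1-xq^{2r+3}}
\end{equation*}
in two different ways, where the constant $E$ is to be chosen for clean bookkeeping. The denominator $1-xq^{2r+3} = 1 - (q^2x)q^{2r+1}$ matches the shape of $k_{t,m}(q^2 x;q)$, while the geometric-type numerator is tuned so that the second evaluation produces exactly $2t^2-1$ terms.

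The first evaluation proceeds by expanding the numerator of $S$ as a difference. The ``$1$'' piece gives $x^{2t^2-1}q^{E} k_{t,m}(q^2x;q)$. For the remaining piece, I would substitute $(r,s) \to (r-1, s+2t)$ to restore the denominator to $1-xq^{2r+1}$ and then apply the sign identity (\ref{prop1}) in the form $\text{sg}(r-1, s+2t) = \text{sg}(r,s) - \delta(r) + \sum_{i=1}^{2t}\delta(s+i)$. The $\text{sg}(r,s)$ part reassembles as a scalar multiple of $k_{t,m}(x;q)$. The $\delta(r)$ term leaves an inner $s$-sum that is a power of $q$ times $\Theta(q^{2m};q)$, which vanishes by (\ref{theta2}). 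The $\delta(s+i)$ contributions collapse the $s$-sum and leave a single $r$-sum evaluated by the $q^2$-analog of (\ref{theta3}) at $z = qx$,
\begin{equation*}
\sum_{r\in\mathbb{Z}}\frac{(-1)^r q^{r^2+(2n+1)r}}{1-xq^{2r+1}} = \frac{(q^2;q^2)_{\infty}^3}{(qx)^n\,\Theta(qx;q^2)},
\end{equation*}
producing the $(q^2;q^2)_\infty^3/\Theta(qx;q^2)$ factor together with the sum over $i$ appearing in the middle term of (\ref{r7}).

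For the second evaluation, apply the geometric identity
\begin{equation*}
\frac{1-x^{1-2t^2} q^{(2r+3)(1-2t^2)}}{1-xq^{2r+3}} = -x^{1-2t^2} q^{(2r+3)(1-2t^2)} \sum_{k=0}^{2t^2-2} x^k q^{k(2r+3)}
\end{equation*}
(the $q^2$-analog of (\ref{geo})) and interchange the order of summation. Each $k$-summand becomes a Hecke-type double sum of the form $f_{2,2t,1}(q^{2t+2-4t^2+2k}, q^{2m}; q)$, matching the last term of (\ref{r7}). Equating the two evaluations of $S$ and solving for $k_{t,m}(q^2 x;q)$ yields (\ref{r7}). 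Finally, (\ref{r8}) follows by multiplying (\ref{r7}) through by $\Theta(q^3 x;q^2)$ and using the base-$q^2$ case of (\ref{theta1}), namely $\Theta(q^3 x;q^2) = -q^{-1}x^{-1}\Theta(qx;q^2)$, so that $\Theta(q^3 x;q^2)\, k_{t,m}(q^2 x;q) = -q^{-1}x^{-1}\hat{k}_{t,m}(q^2 x;q)$.

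The principal difficulty is bookkeeping rather than conceptual: the denominator $1-xq^{2r+1}$ lives on the base-$q^2$ lattice while the quadratic form in $r,s$ lives on the base-$q$ lattice, so every ingredient — the sign identity, the theta summation, and the collection of powers of $x$ and $q$ — must be recomputed in this rescaled setting. In particular, the $q^2$-analog of (\ref{theta3}) invoked above is not explicitly recorded in Section~2 and must be verified separately (it follows from (\ref{theta3}) by $q \to q^2$ and $x \to qx$), and matching the $(qx)^n$ factor arising from this theta summation with the explicit $x^{ti}$ in the $i$-sum of (\ref{r7}) requires careful tracking of the shifts in the quadratic exponent.
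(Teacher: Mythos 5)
Your proposal follows essentially the same route as the paper: the paper computes exactly your sum $S$ with $E=-3+4t^2+3t-4tm$, expands the numerator, applies the shift $(r,s)\to(r-1,s+2t)$ together with (\ref{prop1}), (\ref{theta2}) and the $q^2$-analogue of (\ref{theta3}) for one evaluation, uses the geometric expansion for the other, and then deduces (\ref{r8}) from (\ref{r7}) via (\ref{theta1}). The only slip is in your final identity, which should read $\Theta(q^3x;q^2)\,k_{t,m}(x;q)=-q^{-1}x^{-1}\hat{k}_{t,m}(x;q)$ (argument $x$, not $q^2x$), since $\Theta(q^3x;q^2)\,k_{t,m}(q^2x;q)$ is by definition $\hat{k}_{t,m}(q^2x;q)$ itself.
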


\begin{proof} We first compute the sum
\begin{equation} \label{sum3}
x^{2t^2-1} q^{-3+4t^2+3t-4tm} \sum_{r,s \in \mathbb{Z}} \text{sg}(r,s) (-1)^{r+s} q^{2\binom{r}{2} + 2trs + \binom{s}{2} + 2tr + 2ms} \frac{1-x^{1-2t^2}q^{(2r+3)(1-2t^2)}}{1-xq^{2r+3}}
\end{equation}
in two ways. Expanding the numerator yields
\begin{equation} \label{expand3}
x^{2t^2-1}q^{-3+4t^2+3t-4tm} {\kappa}_{t,m}(q^2 x;q) - \sum_{r,s \in \mathbb{Z}} \text{sg}(r,s)(-1)^{r+s} \frac{q^{2\binom{r}{2} + 2trs + \binom{s}{2} + 2tr + 2ms - 2t^2 + 3t + 2r - 4t^2 r - 4tm}}{1-xq^{2r+3}}.
\end{equation}
Taking $(r,s) \to (r-1, s+2t)$ in the second sum in (\ref{expand3}) and using (\ref{prop1}), (\ref{theta2}) and (\ref{theta3}) yields
\begin{equation} \label{oneside3}
\begin{aligned}
&  -\sum_{r,s \in \mathbb{Z}} \text{sg}(r-1,s+2t)(-1)^{r+s}\frac{q^{2\binom{r}{2} + 2trs + \binom{s}{2} + 2tr + 2ms}}{1-xq^{2r+1}} \\
 & \qquad \qquad \qquad \qquad \qquad \qquad = - {\kappa}_{t,m}(x;q) - \frac{(qx)^{1-t} (q^2; q^2)_{\infty}^{3}}{\Theta(qx;q^2)} \sum_{i=1}^{2t} (-1)^i q^{\binom{i+1}{2} - 2mi + ti} x^{ti}. 
 \end{aligned}
\end{equation}
Alternatively, we use
\begin{equation*} \label{geo1}
\frac{1-x^{1-2t^2} q^{(2r+3)(1-2t^2)}}{1-xq^{2r+3}} = -x^{1-2t^2} q^{(2r+3)(1-2t^2)} \sum_{k=0}^{2t^2 - 2} x^k q^{(2r+3)k}
\end{equation*}
to express (\ref{sum3}) as
\begin{equation} \label{otherside3}
\begin{aligned}
-q^{-2t^2 + 3t - 4tm} \sum_{k=0}^{2t^2-2} x^k q^{3k} f_{2,2t,1}(q^{2t+2 - 4t^2 + 2k}, q^{2m}; q).
\end{aligned}
\end{equation}
Combining (\ref{expand3})--(\ref{otherside3}) gives us (\ref{r7}). Finally, (\ref{r8}) follows from (\ref{theta1}) and (\ref{r7}).
\end{proof}
We can now prove our third result.
\begin{proof}[Proof of Theorem \ref{main3}]
As $\hat{\kappa}_{t,m}(x) = \hat{\kappa}_{t,m}(x;q)$ does not have poles, we write
\begin{equation} \label{L3}
\hat{\kappa}_{t,m}(x) = \sum_{r \in \mathbb{Z}} (-1)^r q^{\frac{r^2}{2t^2} - \frac{2t^2 - 2 + 3t - 4tm}{2t^2} r} a_r x^{-r}
\end{equation}
for all $x \in \mathbb{C} \setminus \{ 0 \}$. Substituting (\ref{L3}) into (\ref{r8}), we obtain 
\begin{equation} \label{ar3}
\begin{aligned}
\sum_{r \in \mathbb{Z}} (-1)^r q^{\frac{r^2}{2t^2}  - \frac{2t^2 - 2 + 3t - 4tm}{2t^2} r - 2r} a_r x^{-r} &= x^{-2t^2} q^{2-4t^2 - 3t + 4tm} \sum_{r \in \mathbb{Z}}(-1)^r q^{\frac{r^2}{2t^2} - \frac{2t^2 - 2 + 3t - 4tm}{2t^2} r} a_r x^{-r} \\ 
&+ x^{-2t^2 - t + 1} q^{3 - 4t^2 - 4t + 4tm} (q^2; q^2)_{\infty}^3 \sum_{i=1}^{2t}(-1)^i q^{\binom{i+1}{2} - 2mi + ti} x^{ti} \\
&+ x^{-2t^2} q^{2-6t^2} \Theta(qx;q^2) \sum_{k=0}^{2t^2-2} x^{k} q^{3k} f_{2,2t,1}(q^{2t+2 - 4t^2 + 2k}, q^{2m}; q).
\end{aligned}
\end{equation}
Using (\ref{theta}) and (\ref{id}), the last sum on the right-hand side of (\ref{ar3}) can be written as
\begin{equation} \label{switch3}
q^{2-6t^2 + 4t^4} \sum_{r \in \mathbb{Z} }(-1)^r q^{r^2 - 4t^2 r} \sum_{k=0}^{2t^2-2}(-1)^k q^{k^2 + 3k + 2rk - 4t^2k}  f_{2,2t,1}(q^{2t+2 + (2r - 4t^2)k}, q^{2m}; q) x^{-r}.
\end{equation}
We now let $r \to r-2t^2$ in the first term on the right-hand side of (\ref{ar3}), apply (\ref{switch3}) and then compare coefficients of $x^{-r}$ in the resulting expressions to arrive at the recurrence relation
\begin{equation} \label{newrecur3}
a_r = a_{r-2t^2} + b_{r}^{\prime} + c_{r}^{\prime}
\end{equation}
where
\begin{equation*}
\begin{aligned}
b_{r}^{\prime} & := q^{2 + 4t^4 - 6t^2 + r^2 - 4t^2 r- \frac{r^2}{2t^2} + \frac{2t^2 - 2 + 3t - 4tm}{2t^2} r + 2r} \\
& \qquad \times \sum_{k=0}^{2t^2-2}(-1)^k q^{k^2 + 3k + (2r - 4t^2)k} f_{2,2t,1}(q^{2t+2 - 4t^2 + 2k}, q^{2m}; q)
\end{aligned}
\end{equation*}
and
\begin{equation*}
c_{r}^{\prime} := (-1)^{i+t+ti+1} (q^2;q^2)_{\infty}^3 q^{3 - 4t^2 - 4t + 4tm + \binom{i+1}{2} - 2mi + ti - \frac{(2t^2 + t - 1 - ti)^2}{2t^2} + \frac{6t^2 - 2 + 3t - 4tm}{2t^2}(2t^2 + t-1-ti)}
\end{equation*}
if $r=2t^2 + t-1-ti$, $1\leq i \leq 2t$, and is 0 otherwise. Moreover, a similar computation as in (\ref{arform}) implies
\begin{equation} \label{keylim1}
\lim_{r \to \pm \infty} a_r = 0.
\end{equation}
Now, observe that (\ref{newrecur3}) is equivalent to
\begin{equation} \label{newrecur4}
a_r - a_{r+2t^2} = b_r + c_r
\end{equation}
where $b_r := -b_{r+2t^2}^{\prime}$ and $c_{r} := -c_{r+2t^2}^{\prime}$. We now claim that
\begin{equation} \label{newclaim3}
a_r = \sum_{l \in \mathbb{Z}} \text{sg}(r,l) b_{r+2t^2l}.
\end{equation}
To deduce this, we let $\alpha_r := q^{\frac{r^2}{2t^2} - \frac{2t^2 - 2 + 3t - 4tm}{2t^2}r} a_r$ and use (\ref{newrecur4}) to obtain
\begin{equation} \label{newalpharecur2}
\alpha_r = q^{-2r-2+3t-4tm} \alpha_{r+2t^2} + q^{\frac{r^2}{2t^2} - \frac{2t^2 - 2 + 3t - 4tm}{2t^2}r} b_r + q^{\frac{r^2}{2t^2} - \frac{2t^2 - 2 + 3t - 4tm}{2t^2}r} c_r. 
\end{equation}
We will show 
\begin{equation*} \label{newclaim4}
\alpha_r = q^{\frac{r^2}{2t^2} - \frac{2t^2 - 2 + 3t - 4tm}{2t^2}r} \sum_{l\in\mathbb{Z}} \text{sg}(r,l) b_{r+2t^2l}
\end{equation*}
which clearly implies (\ref{newclaim3}). Let
\begin{equation*}
\Tilde{a}_r := \sum_{l \in \mathbb{Z}} \text{sg}(r,l)b_{r+2t^2l}
\end{equation*}
and
\begin{equation*}
\Tilde{\alpha}_r := q^{\frac{r^2}{2t^2} - \frac{2t^2 - 2 + 3t - 4tm}{2t^2}r} \Tilde{a}_r.
\end{equation*}
Then $\Tilde{a}_r$ and $\Tilde{\alpha}_r$ satisfy (\ref{newrecur3}) and (\ref{newalpharecur2}), respectively, via the same calculation
as in (\ref{ch1}) and (\ref{ch2}) with $r+ 4t^2$ and $r+4t^2 l$ replaced with $r + 2t^2$ and $r+2t^2 l$, respectively, and (\ref{keylim1}). So,
\begin{equation} \label{limdiff3}
\lim_{r \to \infty} \left( \alpha_r - \Tilde{\alpha}_r \right) = 0.
\end{equation}
Finally, we observe
\begin{equation*}
\alpha_r - \Tilde{\alpha}_r - q^{2r+2-3t+4tm} \left( \alpha_{r-2t^2} - \Tilde{\alpha}_{r-2t^2} \right)  = 0
\end{equation*}
which in combination with (\ref{limdiff3}) implies that $\alpha_r = \Tilde{\alpha}_{r}$ and so $a_r = \Tilde{a}_r$. In total,
\begin{equation} \label{end4}
\begin{aligned}
\hat{\kappa}_{t,m}(x) & = \sum_{r \in \mathbb{Z}} (-1)^ r  q^{\frac{r^2}{2t^2} - \frac{2t^2 - 2 + 3t - 4tm}{2t^2}r} a_r x^{-r} \\
& = \sum_{r \in \mathbb{Z}} (-1)^r  q^{\frac{r^2}{2t^2} - \frac{2t^2 - 2 + 3t - 4tm}{2t^2}r} \sum_{l \in \mathbb{Z}} \text{sg}(r,l) b_{r+2t^2 l} x^{-r} \\
& = q^{2+4t^4-6t^2} \sum_{r,l \in \mathbb{Z}} \text{sg}(r,l) (-1)^r q^{(r+2t^2 (l+1))^2 -4t^2 (r + 2t^2(l+1)) - \frac{(r+2t^2 (l+1))^2}{2t^2}} \\
& \quad \times  q^{\frac{2t^2 - 2 + 3t - 4tm}{2t^2} (r+2t^2 (l+1)) + 2(r+2t^2(l+1))} \sum_{k=0}^{2t^2 - 2} (-1)^{k+1} q^{k^2 + 3k + (2(r+2t^2(l+1)) - 4t^2)k + \frac{r^2}{2t^2}} \\
& \qquad \qquad  \qquad \qquad \qquad \qquad \qquad \qquad \qquad \quad \times q^{- \frac{2t^2 - 2 + 3t - 4tm}{2t^2}r} f_{2,2t,1}(q^{2t+2 - 4t^2 + 2k}, q^{2m}; q) \\
& = q^{-2t^2 + 3t - 4tm} \sum_{k=0}^{2t^2 - 2} (-1)^{k+1} q^{k^2 + 3k} f_{2,2t,1}(q^{2t+2 - 4t^2 + 2k}, q^{2m}; q) \\
& \qquad \qquad \qquad \qquad \times \sum_{r, l \in \mathbb{Z}} \text{sg}(r,l) (-1)^r q^{r^2 + (4t^2 - 2)rl + 2t^2 l^2 (2t^2 - 1) + 2kr + (4t^2 k + 2t^2 - 2 + 3t - 4tm) l} x^{-r} \\
& = q^{-2t^2 + 3t - 4tm} \sum_{k=0}^{2t^2 - 2} (-1)^{k+1} q^{k^2 + 3k} f_{2,2t,1}(q^{2t+2 - 4t^2 + 2k}, q^{2m}; q) \\
& \qquad \qquad \qquad \qquad \times f_{1, 2t^2 - 1, 2t^2 (2t^2 - 1)}(x^{-1} q^{2k +1}, -q^{4t^2 k - 2 + 3t - 4tm + 4t^4}; q^2).
\end{aligned}
\end{equation}
Thus, (\ref{mf3}) follows from (\ref{realgen4}), (\ref{kappa}), (\ref{split}), (\ref{t3}) and (\ref{end4}). We now apply (\ref{mzftom}) and (\ref{theta2}) to deduce that $f_{2, 2t, 1}$ is a modular form and (\ref{ftofalse}) to obtain that $f_{1,2t^2 -1, 2t^2(2t^2-1)}$ is a false theta function. 
\end{proof}
For our last case, we start with the following result.
\begin{proposition} For $t \in \mathbb{N}$ and $m \in \mathbb{Z}$, we have
\begin{equation} \label{r9}
\begin{aligned}
p_{t,m}(qx;q) &= -x^{1-4t^2}q^{(m-1)(2t-1)} p_{t,m}(x;q) \\
& - x^{1-4t^2 - t - m + 1} q^{(m-1)(2t-1)} \frac{(q)_{\infty}^{3}}{\Theta(x;q)} \sum_{i=1}^{2t}(-1)^i q^{\binom{i+1}{2} - i(t+m)} x^{2ti}\\
&- x^{1-4t^2} q^{1-4t^2} \sum_{k=0}^{4t^2-2}x^k q^{k} f_{1,2t,1}(q^{t+m+1-4t^2 +k}, q^{t+m}; q)
\end{aligned}
\end{equation}
and
\begin{equation} \label{r10}
\begin{aligned}
\hat{p}_{t,m}(qx;q) &= x^{-4t^2} q^{(m-1)(2t-1)} \hat{p}_{t,m}(x;q) \\
& + x^{-4t^2 - t - m +1} q^{(m-1)(2t-1)} (q)^3_{\infty} \sum_{i=1}^{2t}(-1)^i q^{\binom{i+1}{2} - i(t+m)} x^{2ti}\\
& + x^{-4t^2} q^{1-4t^2}\Theta(x;q) \sum_{k=0}^{4t^2-2} x^{k} q^{k} f_{1,2t,1}(q^{t + m+1-4t^2 +k}, q^{t+m}; q).
\end{aligned}
\end{equation}
\end{proposition}

\begin{proof} We first compute the sum
\begin{equation} \label{sum4}
x^{4t^2-1} q^{-(m-1)(2t-1)} \sum_{r,s \in \mathbb{Z}} \text{sg}(r,s) (-1)^{r+s} q^{\binom{r}{2}+2trs+\binom{s}{2} + (t+m)(r+s)} \frac{1-x^{1-4t^2}q^{(r+1)(1-4t^2)}}{1-xq^{r+1}}
\end{equation}
in two ways. Expanding the numerator yields
\begin{equation} \label{expand4}
\begin{aligned}
x^{4t^2-1} q^{-(m-1)(2t-1)} p_{t,m}(qx;q) - & \sum_{r,s \in \mathbb{Z}}\text{sg}(r,s)(-1)^{r+s} \\
& \qquad \quad \times \frac{q^{\binom{r}{2} + 2trs + \binom{s}{2} + (t+m+1-4t^2)r + (t+m)s - (m-1)(2t-1) + 1 -4t^2}}{1-xq^{r+1}}.
\end{aligned}
\end{equation}
Taking $(r,s) \to (r-1, s+2t)$ in the second sum in (\ref{expand4}) and using (\ref{prop1}), (\ref{theta2}) and (\ref{theta3}) leads to
\begin{equation} \label{oneside4}
\begin{aligned}
& -\sum_{r,s \in \mathbb{Z}} \text{sg}(r-1,s+2t) (-1)^{r+s} \frac{q^{\binom{r}{2}+2trs+\binom{s}{2} + (t+m)r + (t+m)s}}{1-xq^r} \\
& = - \sum_{r,s \in \mathbb{Z}} \text{sg}(r,s)(-1)^{r+s}\frac{q^{\binom{r}{2}+2trs+\binom{s}{2}+(t+m)r + (t+m)s}}{1-xq^r} + \frac{1}{1-x}\sum_{s\in\mathbb{Z}}(-1)^s q^{\binom{s}{2} + (t+m)s} \\
& - \sum_{i=1}^{2t}\sum_{r \in \mathbb{Z}}(-1)^{r-i}\frac{q^{\binom{r}{2}-2tri+\binom{i+1}{2}+(t+m)r - (t+m)i}}{1-xq^r} \\
& = - p_{t,m}(x;q) - \frac{x^{-t-m+1} (q)_{\infty}^{3}}{\Theta(x;q)}\sum_{i=1}^{2t}(-1)^i q^{\binom{i+1}{2} - (t+m)i} x^{2ti}. 
\end{aligned}
\end{equation}
Alternatively, we use (\ref{geo}) to express (\ref{sum4}) as
\begin{equation} \label{otherside4}
\begin{aligned}
&  -q^{-(m-1)(2t-1) + 1-4t^2} \sum_{k=0}^{4t^2-2} x^k q^{k} \sum_{r,s \in \mathbb{Z}} \text{sg}(r,s)(-1)^{r+s} q^{\binom{r}{2} + 2trs + \binom{s}{2} + (t+m)(r+s) + r(1-4t^2 +k)} \\
&= -q^{-(m-1)(2t-1) + 1 - 4t^2} \sum_{k=0}^{4t^2-2} x^k q^{k} f_{1,2t,1}(q^{t+m+1-4t^2 + k}, q^{t+m}; q).
\end{aligned}
\end{equation}
Combining (\ref{expand4})--(\ref{otherside4}) gives us (\ref{r9}). Finally, (\ref{r10}) follows from (\ref{theta1}), (\ref{t4}) and (\ref{r9}).
\end{proof}
We can now prove our final result.
\begin{proof}[Proof of Theorem \ref{main4}]
As $\hat{p}_{t,m}(x) = \hat{p}_{t,m}(x;q)$ does not have poles, we write
\begin{equation} \label{laur}
\hat{p}_{t,m}(x)  = \sum_{r \in \mathbb{Z}} (-1)^r q^{\frac{r^2}{8t^2}+\frac{2t^2 + (m-1)(2t-1)}{4t^2}r} a_r x^{-r}
\end{equation}
for all $x \in \mathbb{C} \setminus \{0\}$. Substituting (\ref{laur}) into (\ref{r10}), we obtain
\begin{equation} \label{recur}
\begin{aligned}
\sum_{r \in \mathbb{Z}} (-1)^r q^{\frac{r^2}{8t^2} + \frac{2t^2 + (m-1)(2t-1)}{4t^2}r-r} a_r x^{-r} &= x^{-4t^2} q^{(m-1)(2t-1)} \sum_{r \in \mathbb{Z}} (-1)^r q^{\frac{r^2}{8t^2} + \frac{2t^2 + (m-1)(2t-1)}{4t^2}r} a_r x^{-r} \\ 
&+ x^{-4t^2 + 1 - t - m} q^{(m-1)(2t-1)}(q)^3_{\infty}\sum_{i=1}^{2t}(-1)^i q^{\binom{i+1}{2} - (t+m)i} x^{2ti}\\
&+x^{-4t^2} q^{1-4t^2}\Theta(x;q) \\
& \qquad \qquad \qquad \quad \times \sum_{k=0}^{4t^2-2}x^{k}q^k f_{1,2t,1}(q^{t+m+1-4t^2 +k}, q^{t+m}; q).
\end{aligned}
\end{equation}
Using (\ref{theta}) and (\ref{id}), the last sum on the right-hand side of (\ref{recur}) can be written as
\begin{equation} \label{rewrite}
q^{1-4t^2} \sum_{r \in \mathbb{Z} }(-1)^r q^{\binom{r+1}{2}} \sum_{k=0}^{4t^2-2}(-1)^k q^{\binom{k+1-4t^2}{2} + r(k-4t^2)+k} f_{1,2t,1}(q^{t+m+1-4t^2 + k}, q^{t+m}; q) x^{-r}.
\end{equation}
We now let $r \to r-4t^2$ in the first term on the right-hand side of (\ref{recur}), apply (\ref{rewrite}) and then compare coefficients of $x^{-r}$ in the resulting expressions to arrive at the recurrence relation
\begin{equation*} \label{newrecur1}
a_r = a_{r-4t^2} + b_r^{\prime} + c_r^{\prime}
\end{equation*}
where
\begin{equation*} \label{defbprime}
\begin{aligned}
b_{r}^{\prime} & := q^{1-4t^2 + \binom{r+1}{2}-\frac{r^2}{8t^2}-\frac{2t^2 + (m-1)(2t-1) - 4t^2}{4t^2}r-4t^2r} \sum_{k=0}^{4t^2-2}(-1)^k q^{\binom{k+1-4t^2}{2} + k(r +1)} \\
&  \qquad \qquad \qquad \qquad \qquad \qquad  \qquad \qquad \qquad \quad  \times f_{1,2t,1}(q^{t+m+1-4t^2+k}, q^{t+m}; q)
\end{aligned}
\end{equation*}
and
\begin{equation*} \label{defcprime}
c_{r}^{\prime} := (-1)^{i+1+t+m} (q)_{\infty}^3 q^{(m-1)(2t-1) + \binom{i+1}{2} - (t+m)i - \frac{(4t^2 -1 + t + m - 2ti)^2}{8t^2} - \frac{2t^2 + (m-1)(2t-1) - 4t^2}{4t^2} (4t^2 - 1 + t + m - 2ti)} 
\end{equation*}
if $r=t+m-1-2ti$, $1 \leq i \leq 2t$ and is $0$ otherwise. As before, we have
\begin{equation*}
a_r = \sum_{l \in \mathbb{Z}} \text{sg}(r,l) b_{r +4t^2 l}
\end{equation*}
where $b_r := - b_{r+4t^2}^{\prime}$ and so in total
\begin{equation} \label{lastone}
\begin{aligned}
\hat{p}_{t,m}(x) & = \sum_{r \in \mathbb{Z}} (-1)^ r q^{\frac{r^2}{8t^2} + \frac{2t^2 + (m-1)(2t-1)}{4t^2}r} a_r x^{-r} \\
& = \sum_{r \in \mathbb{Z}} (-1)^r q^{\frac{r^2}{8t^2} + \frac{2t^2 + (m-1)(2t-1)}{4t^2}r} \sum_{l \in \mathbb{Z}} \text{sg}(r,l) b_{r+4t^2 l} x^{-r} \\
& = q^{1-4t^2 - (m-1)(2t-1)} \sum_{k=0}^{4t^2 - 2} (-1)^{k+1} q^{\binom{k+1}{2} + k} f_{1,2t,1}(q^{t+m+1-4t^2 +k}, q^{t+m}; q) \\
& \quad \times \sum_{r, l \in \mathbb{Z}} \text{sg}(r,l) (-1)^r q^{\frac{r^2}{2} + (4t^2 - 1)rl + 2t^2 l^2 (4t^2 - 1) + (k+\frac{1}{2})r + (-(m-1)(2t-1) + 4t^2 k) l} x^{-r} \\
& = q^{1-4t^2 - (m-1)(2t-1)} \sum_{k=0}^{4t^2 - 2} (-1)^{k+1} q^{\binom{k+1}{2} + k} f_{1,2t,1}(q^{t+m+1-4t^2}, q^{t+m}; q) \\
& \qquad \qquad \qquad \qquad \qquad \qquad  \times f_{1,4t^2 - 1, 4t^2 (4t^2 - 1)}(x^{-1} q^{k+1}, -q^{8t^4 - 2t^2 - (m-1)(2t-1) + 4t^2k}; q).
\end{aligned}
\end{equation}
Thus, (\ref{mf4}) follows from (\ref{realgen5}), (\ref{t4}) and (\ref{lastone}). We now apply (\ref{mzftom}) and (\ref{theta2}) to deduce that $f_{1,2t,1}$ is a modular form and (\ref{ftofalse}) to obtain that $f_{1, 4t^2-1, 4t^2(4t^2-1)}$ is a false theta function. 
\end{proof}

\section{Applications}

\subsection{Recovering (\ref{pt1})} We set $t=m=1$ in Theorem \ref{main1} to obtain
\begin{equation} \label{s1}
\begin{aligned}
\mathcal{U}_1^{(1)}(x;q) & = \frac{(1-x)}{\Theta(x;q)} \frac{q^{-1}}{(q)_{\infty}^2} \Biggl( -f_{1,2,1}(q^{-1}, q; q) f_{1,3,12}(x^{-1}q, -q^{8}; q) \\
& + q^2 f_{1,2,1}(1,q;q) f_{1,3,12}(x^{-1}q^2, -q^{12}; q) - q^5 f_{1,2,1}(q,q;q) f_{1,3,12}(x^{-1} q^3, -q^{16}; q) \Biggr).
\end{aligned}
\end{equation}
Using \cite[Eq. (1.7)]{hm} and (\ref{theta1}), one can show that $f_{1,2,1}(q^{-1}, q; q) = -q (q)_{\infty}^2$, $f_{1,2,1}(1,q;q) = 0$ and $f_{1,2,1}(q,q;q) = (q)_{\infty}^2$ and so (\ref{s1}) becomes
\begin{equation} \label{s2}
\mathcal{U}_1^{(1)}(x;q) = \frac{(1-x)}{\Theta(x;q)} \Bigl( f_{1,3,12}(x^{-1} q, -q^8; q) - q^4 f_{1,3,12}(x^{-1} q^3, -q^{16}; q) \Bigr).
\end{equation}
By (\ref{sgo}), (\ref{theta}), (\ref{ftofalse}), (\ref{theta1}), the quintuple product identity
\begin{equation} \label{qpi}
\sum_{k \in \mathbb{Z}} q^{\frac{k(3k-1)}{2}} x^{3k} (1 - xq^k) = (q,x,q/x)_{\infty} (qx^2, qx^{-2}; q^2)_{\infty},
\end{equation}
(\ref{s2}) and some simplifications, we have
\begin{equation} \label{s3}
\begin{aligned}
\mathcal{U}_1^{(1)}(x;q) & = (1-x) \Biggl( \frac{1}{2} \sum_{r \geq 0} (-1)^r x^{3r} q^{\frac{r(3r+1)}{2}} (1 - x^2 q^{2r+1})  - \frac{1}{2} \sum_{r < 0} (-1)^r x^{3r} q^{\frac{r(3r+1)}{2}} (1 - x^2 q^{2r+1}) \\
& - \frac{1}{2\Theta(x;q)} \sum_{t=0}^{11} (-1)^t x^{-t} q^{\binom{t}{2}} \frac{\Theta(q^t; q^4) \Theta(q^{4+2t}; q^8)}{(q^8; q^8)_{\infty}} \sum_{r \in \mathbb{Z}} \text{sg}(r) (q^{-18 + 3t} x^{-12})^r q^{36 \binom{r+1}{2}} \Biggr).
\end{aligned}
\end{equation}
For $t$ even, one of the theta functions in the numerator of the third term on the right-hand side of (\ref{s3}) is zero. Thus, it suffices to consider $t$ odd. There are two steps. We first replace $t$ with $4t+1$ and let $0 \leq t \leq 2$ in the third term on the right-hand side of (\ref{s3}). This eventually yields
\begin{equation} \label{s4}
\frac{x^{-1}}{2(x)_{\infty} (q/x)_{\infty}} \sum_{k \in \mathbb{Z}} \text{sg}(k) x^{-4k} q^{\binom{2k+1}{2}}.
\end{equation}
Next, we replace $t$ with $4t+3$ and let $0 \leq t \leq 2$ in the third term on the right-hand side of (\ref{s3}). This gives
\begin{equation} \label{s5}
-\frac{x^{-3}}{2(x)_{\infty} (q/x)_{\infty}} \sum_{k \in \mathbb{Z}} \text{sg}(k) x^{-4k} q^{\binom{2k+2}{2}}.
\end{equation}
Combining (\ref{s4}) and (\ref{s5}), performing the shift $k \to -k-1$ and using $\text{sg}(-k-1) = -\text{sg}(k)$ leads to 
\begin{equation} \label{s6}
\frac{x}{2 (x)_{\infty} (q/x)_{\infty}} \sum_{k \in \mathbb{Z}} (-1)^k \text{sg}(k) x^{2k} q^{\binom{k+1}{2}}.
\end{equation}
Now, after inserting (\ref{s6}) into (\ref{s3}), using (\ref{sgo}) and rearranging terms, we have
\begin{equation} \label{s7}
\begin{aligned}
\mathcal{U}_1^{(1)}(x;q) & = (1-x) \Biggl( \sum_{r \geq 0} (-1)^r x^{3r} q^{\frac{r(3r+1)}{2}} (1 - x^2 q^{2r+1}) - \frac{1}{2} \sum_{r \in \mathbb{Z}} (-1)^r x^{3r} q^{\frac{r(3r+1)}{2}} (1 - x^2 q^{2r+1})\\
& + \frac{x}{(x)_{\infty} (q/x)_{\infty}} \sum_{k \geq 0} (-1)^k x^{2k} q^{\binom{k+1}{2}} - \frac{x}{2(x)_{\infty} (q/x)_{\infty}} \sum_{k \in \mathbb{Z}} (-1)^k x^{2k} q^{\binom{k+1}{2}} \Biggr).
\end{aligned}
\end{equation}
Applying (\ref{theta}) twice, then (\ref{theta1}) and (\ref{qpi}) to the second sum on the right-hand side of (\ref{s7}) implies
\begin{equation} \label{s8}
\sum_{r \in \mathbb{Z}} (-1)^r x^{3r} q^{\frac{r(3r+1)}{2}} (1 - x^2 q^{2r+1}) = x^{-1} \frac{(q)_{\infty} \Theta(x^2;q)}{\Theta(x;q)}
\end{equation}
while (\ref{theta}) and (\ref{theta1}) yield
\begin{equation} \label{s9}
\sum_{k \in \mathbb{Z}} (-1)^k x^{2k+1} q^{\binom{k+1}{2}} = -x^{-1} \Theta(x^2;q).
\end{equation}
Thus, (\ref{pt1}) follows from (\ref{s7})--(\ref{s9}) and then replacing $x$ with $-x$.

\subsection{Recovering (\ref{pt2})} We first set $t=m=1$ in Theorem \ref{main2}, then use \cite[Eq. (1.7)]{hm} and (\ref{theta1}) to deduce that $f_{1,2,1}(q^{-2}, q, q)=-q^2 (q)_{\infty}^2$, $f_{1,2,1}(q^{-1}, q; q) = -q (q)_{\infty}^2$ and $f_{1,2,1}(1, q; q)=0$ in order to obtain 
\begin{equation} \label{ss1}
\begin{aligned}
W_{1}^{(1)}(x;q) & = -\frac{(1-x)}{(x)_{\infty} (q/x)_{\infty}} + \frac{(1-x)}{\Theta(x;q)} \Biggl( f_{1,3,12}(x^{-1}q, -q^7; q) + f_{1,3,12}(xq, -q^7; q) \\
& \qquad \qquad \qquad \qquad \qquad \qquad \qquad - q f_{1,3,12}(x^{-1} q^2, -q^{11}; q) - q f_{1,3,12}(xq^2, -q^{11}; q) \Biggr).
\end{aligned}
\end{equation}
By (\ref{ftofalse}), (\ref{qpi}) and some simplifications, we have
\begin{equation} \label{ss2}
\begin{aligned}
& f_{1,3,12}(xq, -q^7; q) - q f_{1,3,12}(xq^2, -q^{11}; q) \\
& \qquad \qquad = \frac{\Theta(x;q)}{2} \Biggl( -x^{-1} \sum_{r \in \mathbb{Z}} \text{sg}(r) (-1)^r x^{-3r} q^{3\binom{r+1}{2} - 2r} - x^{-2} \sum_{r \in \mathbb{Z}} \text{sg}(r) (-1)^r x^{-3r} q^{3 \binom{r+1}{2} - r} \Biggr) \\
& \qquad \qquad + \frac{1}{2} \sum_{t=0}^{11} (-1)^t x^t q^{\binom{t}{2}} \frac{\Theta(q^{t+1}; q^4) \Theta(q^{6+2t}; q^8)}{(q^8; q^8)_{\infty}} \sum_{r \in \mathbb{Z}} \text{sg}(r) (q^{-15 + 3t} x^{12})^r q^{36 \binom{r+1}{2}}. 
\end{aligned} 
\end{equation}
We now let $r \to -r-1$ in each of the first two terms on the right-hand side of (\ref{ss2}), use $\text{sg}(-r-1) = - \text{sg}(r)$ and simplify to obtain
\begin{equation} \label{ss3}
-\frac{\Theta(x;q)}{2} \sum_{r \in \mathbb{Z}} \text{sg}(r) (-1)^r x^{3r+1} q^{\frac{(r+1)(3r+2)}{2}} (1 + xq^{r+1}).
\end{equation}
For $t$ odd, one of the theta functions in the numerator of the third term on the right-hand side of (\ref{ss2}) is zero. Thus, it suffices to consider $t$ even. There are two steps. We first replace $t$ with $4t$ and let $0 \leq t \leq 2$ in the third term on the right-hand side of (\ref{ss2}). This eventually yields
\begin{equation} \label{ss4}
\frac{(q)_{\infty}}{2} \sum_{k \in \mathbb{Z}} \text{sg}(k) x^{4k} q^{\binom{2k+1}{2}}.
\end{equation}
Next, we replace $t$ with $4t+2$ and let $0 \leq t \leq 2$ in the third term on the right-hand side of (\ref{ss2}). This gives
\begin{equation} \label{ss5}
-\frac{(q)_{\infty}}{2} \sum_{k \in \mathbb{Z}} \text{sg}(k) x^{4k+2} q^{\binom{2k+2}{2}}.
\end{equation}
We now insert the sum of (\ref{ss4}) and (\ref{ss5}) along with (\ref{ss3}) into (\ref{ss2}) to obtain
\begin{equation} \label{ss6}
\begin{aligned}
& f_{1,3,12}(xq, -q^7; q) - q f_{1,3,12}(xq^2, -q^{11}; q) \\
& \qquad \quad = -\frac{\Theta(x;q)}{2} \sum_{r \in \mathbb{Z}} \text{sg}(r) (-1)^r x^{3r+1} q^{\frac{(r+1)(3r+2)}{2}} (1 + xq^{r+1}) + \frac{(q)_{\infty}}{2} \sum_{k \in \mathbb{Z}} \text{sg}(k) (-1)^k x^{2k} q^{\binom{k+1}{2}}.
\end{aligned}
\end{equation}
A similar computation yields
\begin{equation} \label{ss7}
\begin{aligned}
& f_{1,3,12}(x^{-1} q, -q^7; q) - q f_{1,3,12}(x^{-1} q^2, -q^{11}; q) \\
& \qquad \quad = \frac{\Theta(x;q)}{2} \sum_{r \in \mathbb{Z}} \text{sg}(r) (-1)^r x^{3r} q^{\frac{r(3r-1)}{2}} (1 + xq^{r}) + \frac{(q)_{\infty}}{2} \sum_{k \in \mathbb{Z}} \text{sg}(k) (-1)^k x^{-2k} q^{\binom{k+1}{2}}.
\end{aligned}
\end{equation}
After combining (\ref{ss6}) and (\ref{ss7}), (\ref{ss1}), using (\ref{sgo}) and rearranging terms, we have
\begin{equation} \label{ss8}
\begin{aligned}
W_{1}^{(1)}(x;q) & =  -\frac{(1-x)}{(x)_{\infty} (q/x)_{\infty}} + (1-x) \Biggl( -\sum_{r \geq 0} (-1)^r x^{3r+1} q^{\frac{(r+1)(3r+2)}{2}} ( 1+ xq^{r+1}) \\
& + \frac{1}{2} \sum_{r \in \mathbb{Z}} (-1)^r x^{3r+1} q^{\frac{(r+1)(3r+2)}{2}} ( 1+ xq^{r+1}) + \sum_{r \geq 0} (-1)^r x^{3r} q^{\frac{r(3r-1)}{2}} (1 +xq^r) \\
& - \frac{1}{2} \sum_{r \in \mathbb{Z}} (-1)^r x^{3r} q^{\frac{r(3r-1)}{2}} (1 + xq^r) + \frac{1+x^2}{(x)_{\infty} (q/x)_{\infty}} \sum_{k \geq 0} (-1)^k x^{2k} q^{\binom{k+1}{2}} \\
& - \frac{1+x^2}{2(x)_{\infty} (q/x)_{\infty}} \sum_{k \in \mathbb{Z}} (-1)^k x^{2k} q^{\binom{k+1}{2}} \Biggr).
\end{aligned}
\end{equation}
Applying (\ref{theta}) twice, then (\ref{qpi}) to each of the second and fourth sums and (\ref{theta}) to the sixth sum on the right-hand side of (\ref{ss8}), then using (\ref{theta1}) and cancelling the resulting theta functions leads to
\begin{equation} \label{ss9}
\begin{aligned}
W_{1}^{(1)}(x;q) & =  -\frac{(1-x)}{(x)_{\infty} (q/x)_{\infty}} + (1-x) \Biggl( -\sum_{r \geq 0} (-1)^r x^{3r+1} q^{\frac{(r+1)(3r+2)}{2}} ( 1+ xq^{r+1}) \\
& + \sum_{r \geq 0} (-1)^r x^{3r} q^{\frac{r(3r-1)}{2}} (1 +xq^r) + \frac{1+x^2}{(x)_{\infty} (q/x)_{\infty}} \sum_{k \geq 0} (-1)^k x^{2k} q^{\binom{k+1}{2}} \Biggr).
\end{aligned}
\end{equation}
Finally, we shift $r \to r-1$ in the first sum, remove the $r=0$ term from the second sum and the $k=0$ term from the third sum on the right-hand side of (\ref{ss9}). In total, we have
\begin{equation*}
\begin{aligned}
W_{1}^{(1)}(x;q) & =  -\frac{(1-x)}{(x)_{\infty} (q/x)_{\infty}} + (1-x) \Biggl( 1 + x + (1+x^2) \sum_{r \geq 1} (-1)^r x^{3r-2} q^{\frac{r(3r-1)}{2}} (1 + xq^r)  \\
& + \frac{1+x^2}{(x)_{\infty} (q/x)_{\infty}} + \frac{1+x^2}{(x)_{\infty} (q/x)_{\infty}} \sum_{k \geq 1} (-1)^k x^{2k} q^{\binom{k+1}{2}} \Biggr)
\end{aligned}
\end{equation*}
which is (\ref{pt2}) after simplification.

\subsection{Recovering (\ref{pt3})} We first set $t=m=1$ in Theorem \ref{main3}, then use \cite[Corollary 3.11]{mps} to deduce that $f_{2,2,1}(1,q^2;q) = -q^3 (q)_{\infty} (q^2; q^2)_{\infty}$ in order to obtain
\begin{equation} \label{sss1}
\mathcal{V}_{1}^{(1)}(x;q) = -\frac{1}{1+x} \left( \frac{1}{\Theta(-q;q^2)} f_{1,1,2}(-q,-q;q^2) - \frac{1}{\Theta(qx;q^2)} f_{1,1,2}(x^{-1} q, -q; q^2) \right).
\end{equation}
By (\ref{ftofalse}) and some simplifications, we have
\begin{equation} \label{sss2}
\begin{aligned}
f_{1,1,2}(-q,-q;q^2) & = \frac{\Theta(-q;q^2)}{2} \sum_{r \in \mathbb{Z}} \text{sg}(r) q^{r^2 - r} + \frac{\Theta(-q;q^4)}{2} \sum_{r \in \mathbb{Z}} \text{sg}(r) q^{\binom{r+1}{2}} \\
& = \Theta(-q;q^2)
\end{aligned}
\end{equation}
where in (\ref{sss2}) the first sum is 2 while the second sum is 0 after letting $r \to -r-1$ and using that $\text{sg}(-r-1) = - \text{sg}(r)$. Similarly, one can check that
\begin{equation} \label{sss3}
f_{1,1,2}(x^{-1} q, -q; q^2) = \frac{\Theta(qx;q^2)}{2} \sum_{r \in \mathbb{Z}} \text{sg}(r) (-1)^r x^r q^{r^2 - r} + \frac{\Theta(-q;q^4)}{2} \sum_{r \in \mathbb{Z}} \text{sg}(r) (-1)^r x^{r+1} q^{\binom{r+1}{2}}.
\end{equation}
By (\ref{sgo}), (\ref{sss1})--(\ref{sss3}) and rearranging terms, we have
\begin{equation} \label{sss4}
\begin{aligned}
\mathcal{V}_{1}^{(1)}(x;q) & = -\frac{1}{1+x} \Biggl( 1 - \sum_{r \geq 0} (-1)^r x^r q^{r^2 - r} + \frac{1}{2} \sum_{r \in \mathbb{Z}} (-1)^r x^r q^{r^2 - r} \\
& - \frac{\Theta(-q;q^4)}{\Theta(qx;q^2)} \sum_{r \geq 0} (-1)^r x^{r+1} q^{\binom{r+1}{2}} + \frac{\Theta(-q;q^4)}{2 \Theta(qx;q^2)}  \sum_{r \in \mathbb{Z}} (-1)^r x^{r+1} q^{\binom{r+1}{2}} \Biggr).
\end{aligned}
\end{equation}
Finally, we apply (\ref{theta}) to the second and fourth sums, cancel the resulting theta functions, remove the $r=0$ term and then perform the shift $r \to r+1$ in the first sum on the right-hand side of (\ref{sss4}). In total, 
\begin{equation*}
\mathcal{V}_{1}^{(1)}(x;q) = -\frac{1}{1+x} \left( \sum_{r \geq 0} (-1)^{r+1} x^{r+1} q^{r(r+1)} - \frac{\Theta(-q;q^4)}{2 \Theta(qx;q^2)}\sum_{r \geq 0} (-1)^r x^{r+1} q^{\binom{r+1}{2}} \right)
\end{equation*}
which is (\ref{pt3}) after simplification. 

\subsection{Recovering (\ref{pt4})}  We first set $t=m=1$ in Theorem \ref{main4}, then use \cite[Eq. (1.7)]{hm} and (\ref{theta1}) to deduce that $f_{1,2,1}(q^{-2}, q^4; q^2)=0$, $f_{1,2,1}(1,q^4; q^2) = q^2 (q^2; q^2)_{\infty}^2$ and $f_{1,2,1}(q^2, q^4; q^2)=(q^2; q^2)_{\infty}^2$ in order to obtain
\begin{equation} \label{ssss1}
\mathcal{O}_{1}^{(1)}(x;q) = \frac{q}{\Theta(xq;q^2)} \Bigl( f_{1,3,12}(x^{-1} q^3, -q^{20}; q^2) - q^4 f_{1,3,12}(x^{-1} q^5, -q^{28}; q^2) \Bigr)
\end{equation}
By (\ref{theta}), (\ref{ftofalse}), (\ref{theta1}) and (\ref{qpi}) and some simplifications, we have
\begin{equation} \label{ssss2}
\begin{aligned}
& f_{1,3,12}(x^{-1} q^3, -q^{20}; q^2) - q^4 f_{1,3,12}(x^{-1} q^5, -q^{28}; q^2) \\
& \qquad \qquad = -\frac{q^{-1} \Theta(xq;q^2)}{2} \sum_{r \in \mathbb{Z}} \text{sg}(r) (-1)^r x^{3r+1} q^{3r^2 + 2r} (1 + xq^{2r+1}) \\
& \qquad \qquad + \frac{1}{2} \sum_{t=0}^{11} (-1)^t x^{-t} q^{t^2 + 2t} \frac{\Theta(q^{2t+4}; q^8) \Theta(q^{16 + 4t}; q^{16})}{(q^{16}; q^{16})_{\infty}} \sum_{r \in \mathbb{Z}} \text{sg}(r) x^{-12r} q^{6tr + 36r^2}.
\end{aligned}
\end{equation}
For $t$ even, one of the theta functions in the numerator of the second term on the right-hand side of (\ref{ssss2}) is zero. Thus, it suffices to consider $t$ odd. There are two steps. We first replace $t$ with $4t+1$ and let $0 \leq t \leq 2$ in the second term on the right-hand side of (\ref{ssss2}). This eventually yields
\begin{equation} \label{ssss3}
\frac{q^{-1} x^{-1} (q^2; q^2)_{\infty}}{2} \sum_{k \in \mathbb{Z}} \text{sg}(k) x^{-4k} q^{2 \binom{2k+1}{2}}. 
\end{equation}
Next, we replace $t$ with $4t+3$ and let $0 \leq t \leq 2$ in the third term on the right-hand side of (\ref{ssss2}). This gives
\begin{equation} \label{ssss4}
- \frac{q^{-1} x^{-1} (q^2; q^2)_{\infty}}{2} \sum_{k \in \mathbb{Z}} \text{sg}(k) x^{-4k-2} q^{2 \binom{2k+2}{2}}.
\end{equation}
We now combine (\ref{ssss3}) and (\ref{ssss4}), perform the shift $k \to -k-1$ and use that $\text{sg}(-k-1) = - \text{sg}(k)$ to obtain
\begin{equation} \label{ssss5}
\frac{q^{-1} (q^2; q^2)_{\infty}}{2} \sum_{k \in \mathbb{Z}} \text{sg}(k) (-1)^k x^{2k+1} q^{2 \binom{k+1}{2}}.
\end{equation}
Now, after inserting (\ref{ssss5}) into (\ref{ssss2}), (\ref{ssss1}), using (\ref{sgo}) and rearranging terms, we have
\begin{equation} \label{ssss6}
\begin{aligned}
\mathcal{O}_{1}^{(1)}(x;q) & = -\frac{1}{2} \Biggl( 2 \sum_{r \geq 0} (-1)^r x^{3r+1} q^{3r^2 + 2r} (1 + xq^{2r+1}) - \sum_{r \in \mathbb{Z}} (-1)^r x^{3r+1} q^{3r^2 + 2r} (1 + xq^{2r+1}) \Biggr) \\
&+ \frac{(q^2; q^2)_{\infty}}{2\Theta(xq; q^2)} \Biggl( 2 \sum_{k \geq 0} (-1)^k x^{2k+1} q^{2 \binom{k+1}{2}} - \sum_{k \in \mathbb{Z}} (-1)^k x^{2k+1} q^{2 \binom{k+1}{2}} \Biggr).
\end{aligned}
\end{equation}
Note that two applications of (\ref{theta}) followed by (\ref{qpi}) implies
\begin{equation} \label{ssss7}
\sum_{r \in \mathbb{Z}} (-1)^r x^{3r+1} q^{3r^2 + 2r} (1 + xq^{2r+1}) = x \frac{\Theta(-xq; q^2) \Theta(x^2 q^4; q^4)}{(q^4; q^4)_{\infty}}
\end{equation}
while (\ref{theta}) yields
\begin{equation} \label{ssss8}
\sum_{k \in \mathbb{Z}} (-1)^k x^{2k+1} q^{2\binom{k+1)}{2}} = x \Theta(x^2 q^2; q^2).
\end{equation}
Thus, (\ref{pt4}) follows from (\ref{ssss6})--(\ref{ssss8}), cancelling the theta functions and then replacing $x$ with $-x$.

\section{Concluding Remarks}
First, Kim and Lovejoy \cite{kl2} also consider unimodal sequences where $\sum b_i$ is a partition into parts at most $c-k$ where $k$ is the size of the Durfee square of the partition $\sum a_i$. Let $V(m,n)$ denote the number of such sequences of weight $n$ and rank $m$ and consider its generating function \cite[Eq. (3.1)]{kl2}
\begin{equation*}
V(x;q) :=  \sum_{\substack{n \geq 0 \\ m \in \mathbb{Z}}} V(m,n) x^m q^n = \sum_{n \geq 0} \frac{(q^{n+1})_n q^n}{(xq)_n (q/x)_n}
\end{equation*}
which satisfies the partial theta function identity \cite{andrewsI}, \cite[Entry 6.6.1]{abII}
\begin{equation} \label{pt5}
V(x;q) = (1-x) \left( \sum_{n \geq 0} x^n q^{n(n+1)} + \frac{1}{(x)_{\infty} (q/x)_{\infty}} \sum_{n \geq 0} x^{3n+1} q^{n(3n+2)} (1 - xq^{2n+1}) \right)
\end{equation}
for $x \neq 0$ and the Hecke-Appell type expansion \cite[Eq. (3.3)]{kl2}
\begin{equation} \label{base4}
V(x;q) = \frac{(1-x)}{(q)_{\infty}^2} \left( \sum_{r,s \geq 0} - \sum_{r, s < 0} \right) \frac{(-1)^r q^{\binom{r}{2} + 3rs + 6 \binom{s}{2} + 2r + 5s} (1 - q^{r+2s+1})}{1-xq^r}. 
\end{equation}
For $t$, $m \in \mathbb{Z}$ with $t \geq 1$, $0 \leq m \leq 3t-1$, consider the generalization
\begin{equation} \label{gen3}
\begin{aligned}
& {\ell}_{t,m}(x;q) \\
& := \left( \sum_{r,s \geq 0} - \sum_{r, s < 0} \right) \frac{(-1)^r q^{\binom{r}{2} + 3trs + 3t(3t-1)\binom{s}{2} + (m+1)r + (\binom{3t}{2} + 3t-1)s} (1 - q^{(3t-2m)r + 2 \binom{3t-1}{2}s + \binom{3t-1}{2}})}{1-xq^r}
\end{aligned}
\end{equation}
and
\begin{equation} \label{realgen3}
V_{t}^{(m)}(x;q) := \frac{(1-x)}{(q)_{\infty}^2} {\ell}_{t,m}(x;q).
\end{equation}
By (\ref{base4})--(\ref{realgen3}), $V_{1}^{(1)}(x;q) = V(x;q)$. A similar argument as in the proofs of Theorems \ref{main1}--\ref{main4} shows that 
$V_{t}^{(m)}(x;q)$ is a sum of mixed false theta series, the triple sums appearing in \cite{morttriple} and a modular form. Unfortunately, it is not clear how the $t=m=1$ case of this result recovers (\ref{pt5}). Second, it would be worthwhile to investigate the similarities between Theorems \ref{main1} and \ref{main4} and Warnaar's generalizations of (\ref{pt1}) and (\ref{pt4}) (see \cite[Theorem 1.4, Theorem 7.1]{warnaar}). Finally, given that false theta functions are examples of quantum modular forms \cite[Section 4.4]{go}, it would highly desirable to determine whether Theorems \ref{main1}--\ref{main4} lead to the construction of new families of quantum Jacobi forms in the spirit of \cite{folsom}. 

\section*{Acknowledgements} 
The first author would like to acknowledge funding received from the UCD School of Mathematics and Statistics Research Demonstratorship. The second author would like to thank the National Institute of Science Education and Research, Bhubaneswar, India for their hospitality and support during his visit from January 5-12, 2024. The second author was partially funded by the Irish Research Council Advanced Laureate Award IRCLA/2023/1934. Finally, the authors would like to thank Jeremy Lovejoy, Martin Rubey and Matthias Storzer for their helpful comments and suggestions and to expresses our sincere gratitude to the referee for the calculations in Section 4 and for advice which greatly improved the paper.

\end{document}